\theoremstyle{plain} 
		\newtheorem{theo}{Theorem}[section] 
		\newtheorem{lem}[theo]{Lemma}
		\newtheorem{pro}[theo]{Proposition}
		\newtheorem{Corr}[theo]{Corollary}
		\newtheorem{observation}[theo]{\textbf{Observation}}
\theoremstyle{definition} 
		\newtheorem{defi}[theo]{Definition}
\theoremstyle{remark}
\theoremstyle{plain} 
\theoremstyle{definition} 
\theoremstyle{remark}
\newcommand{\N}{{\mathbb N}}
\newcommand{\R}{{\mathbb R}}
\newcommand{\C}{{\mathbb C}}
\newcommand{\8}{{\infty}}
\newcommand{\ra}{{\rightarrow}}
\newcommand{\ip}[2]{\langle {#1} , {#2} \rangle}
\newcommand{\actson}{\curvearrowright}
\newcommand{\abs}[1]{\left\lvert #1 \right\rvert}
\newcommand{\norm}[1]{\left\lVert #1 \right\rVert}
\DeclareMathOperator{\Tube}{Tube}
\renewcommand\phi\varphi
\renewcommand\epsilon\varepsilon
\newcommand{\claimmark}{\hfill$\heartsuit$}
\DeclareMathOperator{\supp}{supp}
\def\lcsc{l.c.s.c.\@\xspace}
\newbox\hangbox
\def\hang#1{\setbox\hangbox=\hbox{#1}\hspace*{-\wd\hangbox}\box\hangbox}
\begin{document}
\selectlanguage{english}

\title{Property A and uniform embedding for locally compact groups}
\author{Steven Deprez$^{1}$ and Kang Li$^{2}$}
\thanks{\hang{$^{1}$ }corresponding author, University of Copenhagen, sdeprez@math.ku.dk}
\thanks{Supported by ERC Advanced Grant no. OAFPG 247321}
\thanks{\hang{$^{2}$ }University of Copenhagen, kang.li@math.ku.dk}
\thanks{\hang{$^{1,2}$ }Supported by the Danish National Research Foundation through the Centre for Symmetry and Deformation}
\thanks{(DNRF92).} 
\begin{abstract}
For locally compact groups, we define an analogue to Yu's property A that he defined for discrete metric spaces. We show that our property A for locally compact groups agrees with
Roe's notion of property A for proper metric spaces, defined in \cite{R05}.
We prove that many of the results that are known to hold in the discrete setting, hold also in the locally compact setting. In particular, we show that property A is equivalent to amenability at
infinity (see \cite{HR00} for the discrete case), and that a locally compact group with property A embeds uniformly into a Hilbert space (see \cite{Yu00} for the discrete case).
We also prove that the Baum-Connes assembly map with coefficients
is split-injective, for every locally compact group that
embeds uniformly into a Hilbert space. This extends results by Skandalis, Tu and Yu \cite{STY02}, and by Chabert, Echterhoff and Oyono-Oyono \cite{CEO04}.
\end{abstract}
\maketitle

\noindent\textit{Mathematics Subject Classification (2010): }19K35 (46L80)\\
\textit{Keywords: }Property A, Baum-Connes Conjecture, Uniform Embeddability, Locally compact Groups\\






\section{Introduction}
%
%
Gromov introduced the notion of uniform embeddability of metric spaces and suggested that finitely generated discrete groups that are uniformly embeddable in a Hilbert space, when viewed as metric spaces
with a word length metric, might satisfy the Novikov conjecture \cite{FRR95,G93}.
Yu showed that this is indeed the case, provided that the classifying space is a finite CW-complex \cite{Yu00}. In the same paper Yu introduced a weak form of amenability on discrete metric spaces that he
called property A, which guarantees the existence of a uniform embedding into Hilbert space. Higson and Roe observed in \cite{HR00} that the metric space underlying a finitely generated discrete group has
property A if and only if it admits a topologically amenable action on some compact Hausdorff space. Ozawa showed in \cite{Oza00} that a discrete group admits a topologically amenable action on a compact Hausdorff space if and only if the group is exact.
In the case of property A groups, Higson strengthened Yu's result by removing the finiteness
assumption on the classifying space \cite{H00}.
Indeed, he proved that the Baum-Connes assembly map with coefficients, for any countable discrete group
which has a topologically amenable action on a compact Hausdorff space,
is split-injective.
Baum, Connes and Higson showed that this implies the Novikov conjecture \cite{BCH94}.
Using Higson's descent technique (see \cite{H00}), Skandalis, Tu and Yu \cite{STY02} were able to generalize the split-injectivity result
to arbitrary discrete groups which admit a uniform embedding into Hilbert space, and hence they answered Gromov's question.

In \cite{R05}, Roe generalized property A to proper metric spaces with bounded geometry (in the sense of \cite{Roe95}). All second countable locally compact groups have a proper left-invariant metric that implements the
topology, and such a metric is unique up to coarse equivalence (see \cite{HP06} and \cite{S74}).
Moreover, locally compact groups with a proper left-invariant metric, have bounded geometry (see \cite{HP06}). Roe already proved that his generalization of
property A is equivalent to Ozawa's notion of exactness, see \cite{R05}. A locally compact group $G$ satisfies Ozawa's notion of exactness if there
is a net of positive type kernels
$k_i:G\times G\rightarrow\C$ that tends to $1$ uniformly on tubes, and such that each $k_i$ is supported in a tube. We say that a subset $T\subseteq G\times G$ is a tube if it is contained in a set of the
form $\Tube(K)=\{(s,t): s^{-1}t\in K\}\subseteq G\times G$ for some compact subset $K\subseteq G$.
Anantharaman-Delaroche has shown in \cite{CA02} that whenever a locally compact group admits a topologically
amenable action on a compact Hausdorff space, it also satisfies Ozawa's notion of exactness. 

We give an alternative definition of property A, that resembles more closely Yu's definition,
and we show that it is equivalent to Roe's definition. Moreover, we give a direct and elementary proof that it is equivalent to Ozawa's notion of exactness.
We continue by showing that a locally compact group has property A if and only if it has a topologically amenable action on a compact Hausdorff space. This statement was proven for discrete groups
by Higson and Roe \cite{HR00}.

Whenever a locally compact group admits a topologically amenable action on a compact Hausdorff space, it is uniformly embeddable into a Hilbert space (see \cite{CA02}). By the above, this is also true for groups with property A.
We also give an alternative characterisation of the locally compact groups that embed uniformly into a Hilbert space. We say that an action $G\actson X$ on a compact Hausdorff space $X$ has the Haagerup property
if its transformation
groupoid $X\rtimes G$ admits a continuous proper conditionally negative type function. Then we show that a locally compact group embeds uniformly into a Hilbert space
if and only if it admits a Haagerup action on a compact Hausdorff space.
Finally, we apply Higson's descent technique and the going--down functor of Chabert, Echterhoff and Oyono-Oyono, to obtain an analogue of the result of Skandalis, Tu and Yu (see \cite{STY02}):
we show that the Baum-Connes assembly map is split-injective for all locally
compact groups that embed uniformly into Hilbert space.

\subsection*{Acknowledgements}
The authors wish to thank Uffe Haagerup and Ryszard Nest for suggesting the topic of this paper and for valuable discussions. We would also like to thank Claire Anantharaman-Delaroche for inspiring conversations.
\section{Property A for locally compact groups}
In this section, we introduce our own notion of property A for locally compact second countable (\lcsc) groups.
Yu first introduced property A for discrete metric spaces in \cite{Yu00}. Our own notion of property A is closely modelled on Yu's definition.
Roe has introduced a generalization of property A for proper metric spaces with bounded geometry (see \cite{R05}).
Every second countable locally compact group $G$ has a proper left-invariant metric $d$ that implements the topology on $G$. This metric is unique up to coarse equivalence. Moreover,
the proper metric space $(G,d)$ has bounded geometry, see \cite{HP06} and \cite{S74}.
So Roe's property A makes sense for \lcsc groups, and we show in theorem \ref{equA} that
it agrees with our property A. We combine theorem \ref{equA} with Anantharaman-Delaroche's description of groups that are amenable at infinity \cite[Proposition 3.4 and 3.5]{CA02},
i.e. groups that admit a topologically amenable action on a compact Hausdorff space.
In this way we obtain corollary \ref{AAmen}: a \lcsc group has property A if and only if it is amenable at
infinity.

For the rest of the paper, $G$ will always denote a \lcsc group. We fix a left Haar measure $\mu$ on $G$.
We also consider the measure $\mu^\prime$ on $G\times\N$ which is the product measure of $\mu$ with the counting measure on $\N$.

%

\begin{defi}
  Let $G$ be a \lcsc group and let $K\subseteq G$ be a compact subset. Then we write
  \[\Tube(K)=\{(s,t)\in G\times G : s^{-1}t\in K\}.\]
  We say that a subset $T\subseteq G\times G$ is a \emph{tube} if $\{s^{-1}t:(s,t)\in T\}$ is precompact,
  or equivalently, if $T\subseteq \Tube(K)$ for some compact subset $K\subseteq G$.
\end{defi}

\begin{defi} A l.c.s.c group $G$ has property A if
for any compact subset $K\subseteq G$ and $\epsilon >0$, there exist a compact subset $L\subseteq G$ and a family $\{A_s\}_{s\in G}$ of Borel subsets of $G\times \N$ with $0<\mu'(A_s)<\8$ such that
\begin{itemize}
\item for all $(s,t)\in \Tube(K)$ we have
  \begin{align*}
    \frac{\mu'(A_s\Delta A_t)}{\mu'(A_s\cap A_t)}<\epsilon,
  \end{align*}
\item$(t,n)\in A_s$ implies $(s,t)\in \Tube(L)$.
\end{itemize}
\end{defi}

Theorem \ref{equA} below gives a number of equivalent characterizations of property A. It is an extention of \cite{Tu01} to the locally compact case. Condition (3) says that $G$ has property A in the sense
of Roe \cite{R05}, as a proper metric space with bounded geometry.
Condition (5) is Ozawa's notion of exactness for \lcsc groups \cite{Oza00}. The equivalence between (3) and (5) has already been proven in \cite{R05},
by reducing the problem to the discrete case. Nevertheless, we provide a simple direct proof of their equivalence, for the sake of completeness.

\begin{theo}\label{equA}
Let $G$ be a \lcsc group. The following are equivalent:
\begin{itemize}
\item[$1)$] $G$ has property A;\\
\item[$2)$] For any compact subset $K\subseteq G$ and $\epsilon>0$, there exist a compact subset $L\subseteq G$ and a continuous map $\eta:G\ra L^1(G)$ such that $||\eta_t||_1=1$, $\supp\eta_t\subseteq tL$ for every $t\in G$ and
\begin{align*}
\sup_{(s,t)\in \Tube(K)}||\eta_s-\eta_t||_1<\epsilon;
\end{align*}
\item[$3)$] For any compact subset $K\subseteq G$ and $\epsilon>0$, there exist a compact subset $L\subseteq G$ and a weak-$*$ continuous map $\nu:G\ra C_0(G)^*_+$ such that $||\nu_t||=1$, $\supp\nu_t\subseteq tL$ for every $t\in G$ and
\begin{align*}
\sup_{(s,t)\in \Tube(K)}||\nu_s-\nu_t||<\epsilon;
\end{align*}
\item[$4)$] For any compact subset $K\subseteq G$ and $\epsilon>0$, there exist a compact subset $L\subseteq G$ and a continuous map $\xi:G\ra L^2(G)$ such that $||\xi_t||_2=1$, $\supp\xi_t\subseteq tL$ for every $t\in G$ and
\begin{align*}
\sup_{(s,t)\in \Tube(K)}||\xi_s-\xi_t||_2<\epsilon;
\end{align*}
\item[$5)$] For any compact subset $K\subseteq G$ and $\epsilon>0$, there exist a compact subset $L\subseteq G$ and a continuous positive type kernel $k:G\times G\ra \C$ such that $\supp k\subseteq \Tube(L)$ and
\begin{align*}
\sup_{(s,t)\in \Tube(K)}|k(s,t)-1|<\epsilon.
\end{align*}
\end{itemize}
\end{theo}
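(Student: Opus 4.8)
The plan is to prove the theorem as a cycle of implications $1)\Rightarrow 4)\Rightarrow 2)\Rightarrow 3)\Rightarrow 5)\Rightarrow 1)$, following the pattern of Tu's argument \cite{Tu01} but paying attention to the continuity and measurability issues that arise in the locally compact setting. The guiding idea throughout is the standard dictionary between the three ``flavours'' of approximation: indicator-like families of sets (condition $1$), normalized $L^1$- and $L^2$-functions (conditions $2$ and $4$), measures (condition $3$), and positive type kernels (condition $5$). The passages between them are the usual ones, and the support condition ``$\supp\xi_t\subseteq tL$'' is manifestly the functional translation of ``$(t,n)\in A_s\Rightarrow(s,t)\in\Tube(L)$''.

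First, for $1)\Rightarrow 4)$ I would, given $K$ and $\epsilon$, produce $L$ and the sets $\{A_s\}$ as in property A and set $\xi_s = \mu'(A_s)^{-1/2}\,\mathbf{1}_{A_s}\in L^2(G\times\N)$, identifying $L^2(G\times\N)$ with $L^2(G)\otimes\ell^2(\N)\cong L^2(G)$ as a Hilbert space; the support condition on $A_s$ gives $\supp\xi_s\subseteq sL$ after this identification (one must be a little careful to track how the $\N$-coordinate is absorbed, but this is routine). The key inequality is the elementary estimate
\begin{align*}
\norm{\xi_s-\xi_t}_2^2 \le 2\,\frac{\mu'(A_s\Delta A_t)}{\sqrt{\mu'(A_s)\mu'(A_t)}} \le 2\,\frac{\mu'(A_s\Delta A_t)}{\mu'(A_s\cap A_t)},
\end{align*}
so after shrinking $\epsilon$ at the outset we get $\sup_{(s,t)\in\Tube(K)}\norm{\xi_s-\xi_t}_2<\epsilon$. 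Continuity of $s\mapsto\xi_s$ is the one genuinely ``locally compact'' point: the $A_s$ coming from the definition need not vary continuously, so I would first mollify — replace $\xi_s$ by an average $\int_G \psi(r)\,r\cdot\xi_{r^{-1}s}\,d\mu(r)$ against a compactly supported continuous probability density $\psi$ near the identity (or, equivalently, run the whole construction after convolving), which makes $s\mapsto\xi_s$ norm-continuous at the cost of enlarging $L$ by $\supp\psi$ and worsening the constants controllably.

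For $4)\Rightarrow 2)$ set $\eta_t=\abs{\xi_t}^2=\xi_t\cdot\overline{\xi_t}\in L^1(G)$; then $\norm{\eta_t}_1=1$, $\supp\eta_t=\supp\xi_t\subseteq tL$, and the Powers--Størmer type inequality $\norm{\abs{\xi_s}^2-\abs{\xi_t}^2}_1 \le \norm{\xi_s-\xi_t}_2\,(\norm{\xi_s}_2+\norm{\xi_t}_2) = 2\norm{\xi_s-\xi_t}_2$ gives the estimate, with continuity of $\eta$ inherited from that of $\xi$. For $2)\Rightarrow 3)$ I simply view $\eta_t\,d\mu$ as an element $\nu_t\in C_0(G)^*_+$ of norm $1$; weak-$*$ continuity follows from norm-continuity of $\eta$, and $\norm{\nu_s-\nu_t}=\norm{\eta_s-\eta_t}_1$ since the total variation of a measure with $L^1$ density is the $L^1$ norm. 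For $3)\Rightarrow 5)$ I would use the GNS-type trick: take a square root. Concretely, given $\nu$ with parameters $(L,\epsilon')$, first note $\nu_t$ is a probability measure supported in $tL$; approximating in norm one may assume $\nu_t$ has a continuous density $g_t$ (mollify again), put $\xi_t=\sqrt{g_t}\in L^2(G)$ with $\supp\xi_t\subseteq tL$, and set $k(s,t)=\ip{\xi_s}{\xi_t}_{L^2(G)}$. This $k$ is continuous, of positive type (Gram matrix), satisfies $k(t,t)=1$, is supported in $\Tube(L^{-1}L)$ because $\xi_s,\xi_t$ have disjoint support unless $s^{-1}t\in L^{-1}L$, and $\abs{k(s,t)-1}=\abs{\ip{\xi_s}{\xi_t}-1}\le\tfrac12\norm{\xi_s-\xi_t}_2^2 \le \norm{\xi_s-\xi_t}_2$, which is small on $\Tube(K)$; choosing $\epsilon'$ small enough at the start delivers the bound.

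Finally $5)\Rightarrow 1)$ is the reverse direction and, together with the continuity fixes above, is where I expect the real work to lie. Given a continuous positive type kernel $k$ with $\supp k\subseteq\Tube(L)$ and $\sup_{\Tube(K)}\abs{k-1}<\epsilon'$, the Kolmogorov/GNS decomposition yields a continuous map $t\mapsto\xi_t$ into some Hilbert space $\HH$ with $k(s,t)=\ip{\xi_s}{\xi_t}$; the support condition on $k$ forces $\ip{\xi_s}{\xi_t}=0$ whenever $s^{-1}t\notin L$, which is a ``finite overlap'' condition rather than an honest support condition, so to recover genuine sets $A_s$ I would discretize: fix a compact neighbourhood base and a uniformly locally finite Borel partition (or a measurable cover with bounded multiplicity) of $G$ refining the translates of $L$, use it to realize $\HH$ concretely inside $L^2(G\times\N)$ with $\xi_t$ supported near $t$, and then chop each $\xi_t$ into level sets to produce Borel sets $A_t\subseteq G\times\N$ with $0<\mu'(A_t)<\infty$, $A_t\subseteq\Tube(L')\cap(\{t'\}\times\N)$-type support for a slightly larger $L'$, and $\mu'(A_s\Delta A_t)/\mu'(A_s\cap A_t)$ controlled by $\norm{\xi_s-\xi_t}_2^2$ hence by $\abs{k(s,t)-1}$. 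Bounded geometry of $(G,d)$ — guaranteed for \lcsc groups with a proper left-invariant metric by \cite{HP06}, \cite{S74} — is exactly what makes the uniformly locally finite partition and the absorption of the $\N$-coordinate possible, and managing that combinatorics cleanly, rather than any single inequality, is the main obstacle.
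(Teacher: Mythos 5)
Your cycle $1)\Rightarrow 4)\Rightarrow 2)\Rightarrow 3)\Rightarrow 5)\Rightarrow 1)$ reorganizes the implications, but four of the five arrows are the same standard moves as in the paper's proof (fiberwise counts of the $A_t$, the maps $|\cdot|^{1/2}$ and $|\cdot|^2$, densities versus measures, convolution against a cut-off function to restore continuity). The one place where you genuinely diverge is the closing arrow $5)\Rightarrow 1)$, and that is where there is a real gap. You correctly observe that the GNS representation $k(s,t)=\ip{\xi_s}{\xi_t}$ only yields \emph{orthogonality} of $\xi_s$ and $\xi_t$ when $s^{-1}t\notin L$, not a support condition, and that one must re-realize the $\xi_t$ inside $L^2(G\times\N)$ with $\supp\xi_t$ contained in a fixed translate of $t$. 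But the sentence ``realize [the Hilbert space] concretely inside $L^2(G\times\N)$ with $\xi_t$ supported near $t$'' is a restatement of the problem, not a construction: a positive type kernel supported in a tube does not automatically admit a realization by locally supported vectors, and bounded geometry plus a uniformly locally finite partition does not by itself produce one. The paper's mechanism (in its $5)\Rightarrow 4)$) is the integral operator $T_{k_0}$ on $L^2(G)$ with kernel $k_0$ --- positive and bounded because $k_0$ is a bounded positive type kernel supported in $\Tube(L)$ --- together with a polynomial $p$ approximating $\sqrt{x}$ on $[0,\norm{T_{k_0}}]$: the vectors $\eta_t=p(T_{k_0})f_t$ satisfy $\ip{\eta_s}{\eta_t}\approx k(s,t)$ \emph{and} have honest support in $t\cdot\supp f\cdot((L^d)^{-1}\cup\dots\cup L^{-1}\cup\{e\})$, since each application of $T_{k_0}$ enlarges supports only by $L^{-1}$. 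Some device of this kind (functional calculus on the kernel operator, or a normalization such as $\eta_t(s)=|k(t,s)|^2/\int_G|k(t,r)|^2\,d\mu(r)$, which has genuine support in $tL$) is indispensable, and your sketch does not supply one.

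The remaining arrows are essentially right but need two repairs. In $1)\Rightarrow 4)$ the family $\{A_s\}$ is not assumed measurable in $s$, so you cannot mollify $s\mapsto\xi_s$ directly; you must first replace it by a piecewise constant Borel map via a countable Borel partition $G=\bigsqcup_n C_n$ with $C_n\subseteq s_nC$ (Step 1 of lemma \ref{lem:cont}), and since averaging does not preserve the $L^2$-norm you must also renormalize (harmless, as the averaged vectors have norm at least $1-\epsilon$). In $3)\Rightarrow 5)$, convolving $\nu_t$ against a cut-off function does \emph{not} approximate $\nu_t$ in total variation (a point mass is at distance $2$ from every absolutely continuous measure); the argument survives because $T_f$ is a positive contraction, isometric on $C_0(G)^*_+$, so it preserves $\norm{\nu_t}=1$ and the estimate on $\Tube(K)$ while enlarging supports only by $\supp f$.
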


In statements (2)-(5), we assume that the maps $\eta,\nu,\xi,k$ are continuous, because this is standard for locally compact groups. But in fact, each of the statements (2)-(5) is equivalent to the
corresponding statement without the continuity assumption. In lemma \ref{lem:cont}, we carefully state and prove this for statement (2). We omit the argument for statements (3)-(5), as it is entirely analogous.
Part of the proof of lemma \ref{lem:cont} below consists of convolving $\eta$ with a ``nice'' function. We use the same class of ``nice'' functions several times in the paper, so we give them a name.

\begin{defi}
A cut-off function for $G$ is a function $f$ in $C_c(G)$ such that
\begin{itemize}
\item{$f\geq 0$;}
\item{$f(t^{-1})=f(t)$ for all $t\in G$;}
\item{$\supp f$ is a compact neighborhood of the unit element $e$ of $G$;}
\item{$\int_G f(t)d\mu(t)=1$.}
\end{itemize}
\end{defi}

Observe that every \lcsc group has cut-off functions.

\begin{lem}
  \label{lem:cont}
  Let $G$ be a \lcsc group. Suppose that $G$ satisfies the following property.
  \begin{itemize}
  \item[$2^\prime)$]
    For any compact subset $K\subseteq G$ and $\epsilon>0$, there exist a compact subset $L\subseteq G$ and a map $\eta:G\ra L^1(G)$ such that $\|\eta_t\|_1=1$,
    $\supp\eta_t\subseteq tL$ for every $t\in G$ and
    \begin{align*}
      \sup_{(s,t)\in \Tube(K)}\|\eta_s-\eta_t\|_1<\epsilon;
    \end{align*}
  \end{itemize}
  Then we can assume that the map $t\mapsto\eta_t$ is continuous, i.e., $G$ satisfies property (2) from theorem \ref{equA} above.
\end{lem}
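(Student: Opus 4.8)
The plan is to take the (possibly non-continuous) family $\eta_t$ from hypothesis $(2')$ and convolve it with a cut-off function to produce a continuous family with essentially the same properties, at the cost of enlarging the compact set $L$ slightly.

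First I would fix a compact set $K$ and $\epsilon>0$, and apply $(2')$ with the same $K$ but with $\epsilon/3$ (or some similar fraction) in place of $\epsilon$, obtaining a compact $L_0$ and a map $\eta\colon G\to L^1(G)$ with $\|\eta_t\|_1=1$, $\supp\eta_t\subseteq tL_0$, and $\sup_{(s,t)\in\Tube(K)}\|\eta_s-\eta_t\|_1<\epsilon/3$. The natural thing is to regard $\eta$ as a function of two variables, $\eta(t,x)=\eta_t(x)$, and to convolve in the \emph{first} variable: set
\[
\tilde\eta_t(x)=\int_G f(u^{-1}t)\,\eta_u(x)\intd\mu(u),
\]
where $f$ is a cut-off function whose support $V=\supp f$ is a compact neighbourhood of $e$ to be chosen. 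Continuity of $t\mapsto\tilde\eta_t$ in $L^1$ is then the standard continuity of convolution (translation is continuous on $L^1(G)$, $f\in C_c(G)$), modulo checking a measurability/Fubini point so that the integral makes sense as an $L^1(G)$-valued (Bochner) integral; this is where I would be slightly careful but expect no real trouble. Since $f\geq0$ and $\int f=1$, $\tilde\eta_t$ is a convex combination (an average) of the $\eta_u$, so $\tilde\eta_t\geq0$ and $\|\tilde\eta_t\|_1\le1$; I then need a lower bound on $\|\tilde\eta_t\|_1$ to renormalize. For the support: if $f(u^{-1}t)\neq0$ then $u\in tV^{-1}=tV$ (using $f(t^{-1})=f(t)$, so $V=V^{-1}$), hence $\supp\eta_u\subseteq uL_0\subseteq tVL_0$, and therefore $\supp\tilde\eta_t\subseteq tVL_0=:tL$ with $L=VL_0$ compact.

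The estimates are the heart of the matter. For the Tube condition, for $(s,t)\in\Tube(K)$ I would write
\[
\tilde\eta_s-\tilde\eta_t=\int_G f(u^{-1}s)(\eta_u-\eta_{(us^{-1})t})\intd\mu(u)
\]
by translating the variable in the second integral; note that if $u\in sV$ then $us^{-1}\in V$, so $(u,(us^{-1})t)$ differs from $(s,t)$ by a ``$V$-amount'' and, since $s^{-1}t\in K$ implies $u^{-1}((us^{-1})t)=su^{-1}\cdot s^{-1}t\cdot(\text{correction})$ lies in a \emph{fixed} enlargement $K'=V^{-1}KV$ (a compact set depending only on $K$ and $V$), the pair $(u,(us^{-1})t)$ lies in $\Tube(K')$. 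Hence $\|\eta_u-\eta_{(us^{-1})t}\|_1<\epsilon/3$ by the hypothesis applied with $K'$ — so I should in fact apply $(2')$ at the very start to the \emph{larger} compact set $K'=V^{-1}KV$ rather than to $K$ (choosing $V$ first is harmless since $V$ can be any small compact symmetric neighbourhood of $e$). Integrating against $f$ then gives $\|\tilde\eta_s-\tilde\eta_t\|_1<\epsilon/3$. A similar but easier argument with $s=t$, or rather comparing $\tilde\eta_t$ with $\eta_t=\int f(u^{-1}t)\eta_t\,d\mu(u)$, gives $\|\tilde\eta_t-\eta_t\|_1<\epsilon/3$ for all $t$, which yields $\|\tilde\eta_t\|_1>1-\epsilon/3>0$, so the renormalized family $\hat\eta_t=\tilde\eta_t/\|\tilde\eta_t\|_1$ is well-defined and continuous (the norm function is continuous and bounded away from $0$), has norm $1$, support in $tL$, and a routine triangle-inequality computation bounds $\|\hat\eta_s-\hat\eta_t\|_1$ by a fixed multiple of $\epsilon/3$, which is $<\epsilon$ with the right initial choice of fractions. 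The main obstacle is purely bookkeeping: tracking how the compact sets grow under the convolution and making sure the $\Tube$ membership $(u,(us^{-1})t)\in\Tube(K')$ is justified with an honestly $K$- and $V$-determined $K'$; none of the analytic steps (Bochner integrability, continuity of convolution, the averaging inequalities) should present a genuine difficulty.
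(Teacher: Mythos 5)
Your convolution step is essentially the paper's argument, but there is a genuine gap at the point you dismiss as ``a measurability/Fubini point \dots no real trouble.'' Hypothesis $(2')$ supplies a completely arbitrary map $\eta\colon G\to L^1(G)$: nothing in the statement makes $u\mapsto\eta_u$ Borel, weakly measurable, or even essentially separably valued, and the almost-invariance on a tube does not repair this (one can perturb a good $\eta$ on a non-measurable set by an $L^1$-small amount and still satisfy $(2')$). Consequently the integral $\tilde\eta_t(x)=\int_G f(u^{-1}t)\,\eta_u(x)\,d\mu(u)$ is simply not defined, whether as a Bochner integral or pointwise via Fubini. This is not something one ``checks''; one must first \emph{replace} $\eta$ by a measurable map. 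The paper does this in a separate preliminary step: cover $G$ by countably many translates $s_nC$ of a compact set $C$ with non-empty interior, disjointify into Borel pieces $C_n$, and set $\xi_t=\eta_{s_n}$ for $t\in C_n$; applying $(2')$ to the enlarged compact set $CKC^{-1}$ shows this piecewise constant (hence Borel, with jointly measurable $(t,v)\mapsto\xi_t(v)$) map still satisfies $(2')$ with $L$ replaced by $C^{-1}L$. Only after this discretization does your convolution make sense. That missing step is half of the proof and requires the same kind of tube-enlargement bookkeeping you carry out for the convolution, so it should not be waved away.

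Two smaller points. Your change of variables $\tilde\eta_s-\tilde\eta_t=\int_G f(u^{-1}s)\bigl(\eta_u-\eta_{(us^{-1})t}\bigr)\,d\mu(u)$ is incorrect as written: matching the weight $f(w^{-1}t)$ in $\tilde\eta_t$ to $f(u^{-1}s)$ forces $w=ts^{-1}u$ (a \emph{left} translation, which preserves left Haar measure), not $w=us^{-1}t$ (a right translation, which introduces the modular function and does not produce the weight $f(u^{-1}s)$). With the correct pairing $\eta_u\leftrightarrow\eta_{ts^{-1}u}$ one has $u^{-1}(ts^{-1}u)=v^{-1}(s^{-1}t)v\in V^{-1}KV$ for $u=sv$, $v\in V$, so your enlarged set $K'=V^{-1}KV$ is still the right one; this is a repairable slip. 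Finally, the renormalization is unnecessary: after replacing $\eta_u$ by $\lvert\eta_u\rvert$ (harmless, since $\lVert\,\lvert\eta_s\rvert-\lvert\eta_t\rvert\,\rVert_1\le\lVert\eta_s-\eta_t\rVert_1$), Tonelli gives $\lVert\tilde\eta_t\rVert_1=\int_G f(u^{-1}t)\lVert\eta_u\rVert_1\,d\mu(u)=1$ exactly.
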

\begin{proof}
  The proof proceeds in two steps. In step one, we show that we can assume that $t\mapsto\eta_t$ is a Borel map.
  In step two we use a convolution argument to make $t\mapsto\eta_t$ continuous.

  \textbf{Step 1:} We can assume that $t\mapsto\eta_t$ is a piecewise constant Borel map.\\
  Fix any compact subset $C\subseteq G$ with non-empty interior. Since $G$ is second countable, we find a sequence $(s_n)$ in $G$ such that $G=\cup_n s_n C$.
  Define a sequence $(C_n)$ of Borel subsets of $G$ by induction as follows. We set $C_1=s_1C$ and for each $n>1$, we set $C_n=s_nC\setminus(C_1\cup\ldots\cup C_{n-1})$.

  Let $\varepsilon>0$ and let $K\subseteq G$ be a compact subset. Then we see that the product $CKC^{-1}\subseteq G$ is still a compact subset. Since $G$ satisfies our condition $(2^\prime)$,
  we find a map $\eta:G\rightarrow L^1(G)$ and a compact subset $L\subseteq G$ such that $\|\eta_t\|_1=1$,
  $\supp\eta_t\subseteq tL$ for every $t\in G$ and
  \begin{align*}
    \sup_{(s,t)\in \Tube(CKC^{-1})}\|\eta_s-\eta_t\|_1<\epsilon;
  \end{align*}

  Define a Borel map $\xi:G\rightarrow L^1(G)$ setting $\xi_t=\eta_{s_n}$ whenever $t\in C_n$. This is well-defined because $G$ is the disjoint union of the Borel sets $C_n$.
  We see that $\|\xi_t\|_1=1$ for all $t\in G$. Observe that, if $t\in C_n$, then it follows that $s_n\in tC^{-1}$.
  Let $(s,t)\in\Tube(K)$ and take $n,m\in\N$ such that $s\in C_n$, $t\in C_m$. Then we see that $s_n^{-1}s_m\in CKC^{-1}$, so
  \[\|\xi_s-\xi_t\|_1=\|\eta_{s_n}-\eta_{s_m}\|_1<\varepsilon.\]
  Moreover, we compute that $\supp(\xi_t)=\supp(\eta_{s_m})\subseteq s_mL\subseteq tC^{-1}L$.
  It is now clear that $\xi$ is a Borel map that satisfies condition $(2^\prime)$.

  \textbf{Step 2:} We can assume that $t\mapsto\eta_t$ is continuous.\\
  Fix a cut-off function $f:G\rightarrow [0,\infty)$. Denote $C=\supp(f)$. Let $\varepsilon>0$ and let $K\subseteq G$ be a compact subset. By step 1,
  we find a piecewise constant Borel map $\eta:G\rightarrow L^1(G)$ and a compact subset $L\subseteq G$ such that $\|\eta_t\|_1=1$,
  $\supp\eta_t\subseteq tL$ for every $t\in G$ and
  \begin{align*}
    \sup_{(s,t)\in \Tube(CKC^{-1})}\|\eta_s-\eta_t\|_1<\epsilon;
  \end{align*}

  We define $\xi:G\rightarrow L^1(G)$ by the formula
  \[\xi_t(v)=\int_G f(s^{-1}t)\abs{\eta_{s}(v)} d\mu(s).\]
  We check that $\xi_t$ is indeed in $L^1(G)$ and has norm $1$ and still satisfies condition (2$^\prime$).
  Then we show that $\xi$ is a continuous map, and hence staisfies condition (2) of theorem \ref{equA}.

  We compute that
  \[\norm{\xi_t}_1=\int_G\int_G f(t^{-1}s)\abs{\eta_s(v)} d\mu(s)d\mu(v)=\int_G f(t^{-1}s)\norm{\eta_s}_1d\mu(s)=\int_G f(s)d\mu(s)=1.\]
  It is clear that the support of $\xi_t$ is contained in the compact subset $tCL$. Whenever we have $(s,t)\in \Tube(K)$, we see that
  \begin{align*}
    \norm{\xi_s-\xi_t}_1&=\int_G \abs{\int_G f(r^{-1}s)\abs{\eta_r(v)}d\mu(r) - \int_G f(r^{-1}t)\abs{\eta_r(v)}d\mu(r)}d\mu(v)\\
    &\leq\int_G \int_G f(r^{-1})\abs{\eta_{sr}(v)-\eta_{tr}(v)}  d\mu(r)d\mu(v)\\
    &=\int_G f(r)\norm{\eta_{sr}-\eta_{tr}}_1 d\mu(r)\\
    &<\varepsilon \int_G f(r) d\mu(r)=\varepsilon.
  \end{align*}
  where the last inequality follows because $(sr,tr)\in\Tube(CKC^{-1})$ whenever $r^{-1}\in C$.

  Suppose that $(t_n)$ is a sequence in $G$ that tends to $t$. Without loss of generality, we can assume that
  $t_n$ remains in the compact neighborhood $tC$. It follows that
  \begin{align*}
    \left\lVert\xi_{t_n}-\xi_t\right\rVert_1
    &\leq\int_G\int_G \left\lvert f(s^{-1}t_n)-f(s^{-1}t)\right\rvert \left\lvert \eta_s(v)\right\rvert d\mu(s)d\mu(v)\\
    &=\int_G \abs{f(s^{-1}t_n)-f(s^{-1}t)}\norm{\eta_s}_1 d\mu(s).
  \end{align*}
  This last integral converges to 0 by the Lebesgue dominated convergence theorem.

  We have shown that our continuous map $\xi:G\rightarrow L^1(G)$ satisfies the required conditions.
\end{proof}

Throughout this paper, we often need to ``smoothen'' a given kernel. For example,
when we are given a measurable kernel $k_0$ on $G$, we can obtain a continuous kernel by convolving
$k_0$ with a cut-off function. Lemma \ref{lem:kernel} below shows that this convolution procedure preserves a number of relevant properties
of the kernel.
\begin{lem}
  \label{lem:kernel}
  Let $G$ be a l.c.s.c group and let $k_0:G\times G\rightarrow \C$ be a measurable kernel that is bounded on every tube. Let $f:G\rightarrow [0,\8)$
  be a cut-off function for $G$. Define a new kernel $k:G\times G\rightarrow \C$ by the formula
  \begin{equation}
    \label{eqn:01}
    k(s,t)=\int_G\int_G f(v)f(w)k_0(sv,tw)d\mu(v)d\mu(w)
  \end{equation}
  This kernel satisfies the following properties
  \begin{itemize}
  \item[$1)$] $k$ is bounded on every tube.
  \item[$2)$] $k$ is continuous. In fact, $k$ satisfies the following uniform continuity property:
    whenever $s_n\rightarrow s$ and $t_n\rightarrow t$ in $G$, then we have that
    \[\sup_{v\in G} \abs{k(vs_n,vt_n)-k(vs,vt)}\rightarrow 0.\]
  \item[$3)$] if the support of $k_0$ is a tube, then also the support of $k$ is a tube.
  \item[$4)$] if $k_0$ is a positive type kernel, then so is $k$.
  \end{itemize}
\end{lem}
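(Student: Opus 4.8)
The plan is to treat the four assertions separately, using the formula \eqref{eqn:01} directly for (1)--(3) and the Kolmogorov decomposition of $k_0$ for (4). Write $C=\supp f$; since $f(t^{-1})=f(t)$ this is a compact symmetric neighbourhood of $e$, and $\int_G f\,d\mu=1$. First, $k$ is well defined, because for fixed $(s,t)$ the integrand $(v,w)\mapsto f(v)f(w)k_0(sv,tw)$ is Borel (compose the continuous map $(v,w)\mapsto(sv,tw)$ with $k_0$), is supported in the compact set $C\times C$, and is bounded there since $k_0$ is bounded on every tube. It is convenient to also record the identity
\[k(s,t)=\int_G\int_G f(s^{-1}u)\,f(t^{-1}z)\,k_0(u,z)\,d\mu(u)\,d\mu(z),\]
obtained from \eqref{eqn:01} by the substitutions $u=sv$, $z=tw$ and left-invariance of $\mu$.

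For (1) and (3): if $(s,t)\in\Tube(K)$ and $v,w\in C$, then $(sv)^{-1}(tw)=v^{-1}(s^{-1}t)w$ lies in the compact set $C^{-1}KC$, so $\abs{k_0(sv,tw)}\le\sup_{\Tube(C^{-1}KC)}\abs{k_0}<\8$; since $\int f=1$ this bounds $\abs{k(s,t)}$ and proves (1). For (3), suppose $\supp k_0\subseteq\Tube(L_0)$ with $L_0$ compact. If $k(s,t)\ne0$ then $k_0(sv,tw)\ne0$ for some $v,w\in C$, hence $v^{-1}(s^{-1}t)w\in L_0$ and so $s^{-1}t\in vL_0w^{-1}\subseteq CL_0C$; thus $\{k\ne0\}\subseteq\Tube(CL_0C)$, and since $\Tube(CL_0C)$ is closed (it is the preimage of the compact set $CL_0C$ under $(s,t)\mapsto s^{-1}t$) we get $\supp k\subseteq\Tube(CL_0C)$, a tube.

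For (2): $G$ is metrizable, so it suffices to consider sequences $s_n\to s$, $t_n\to t$. The measure-preserving substitutions $a\mapsto(s_n^{-1}s)a$, $b\mapsto(t_n^{-1}t)b$ transform $k(vs_n,vt_n)=\int_G\int_G f(a)f(b)k_0(vs_na,vt_nb)\,d\mu(a)\,d\mu(b)$ into
\[k(vs_n,vt_n)=\int_G\int_G f(s_n^{-1}sa)\,f(t_n^{-1}tb)\,k_0(vsa,vtb)\,d\mu(a)\,d\mu(b),\]
so $k(vs_n,vt_n)-k(vs,vt)$ is the integral of $\big(f(s_n^{-1}sa)f(t_n^{-1}tb)-f(a)f(b)\big)k_0(vsa,vtb)$. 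This integrand vanishes unless $a,b$ lie in fixed compact sets (for all large $n$ the elements $s^{-1}s_n$ and $t^{-1}t_n$ stay in compact neighbourhoods of $e$), and then $(vsa)^{-1}(vtb)=a^{-1}(s^{-1}t)b$ lies in a fixed compact set $\tilde K$ depending on $s,t$ but not on $v$ or $n$; hence $\abs{k_0(vsa,vtb)}\le M':=\sup_{\Tube(\tilde K)}\abs{k_0}$ and
\[\sup_{v\in G}\abs{k(vs_n,vt_n)-k(vs,vt)}\le M'\int_G\int_G\abs{f(s_n^{-1}sa)f(t_n^{-1}tb)-f(a)f(b)}\,d\mu(a)\,d\mu(b).\]
The right-hand side tends to $0$: by continuity of translation in $L^1(G)$ the functions $a\mapsto f(s_n^{-1}sa)$ and $b\mapsto f(t_n^{-1}tb)$ converge to $f$ in $L^1(G)$, and $(g,h)\mapsto g\otimes h$ is jointly continuous on norm-bounded subsets of $L^1(G)\times L^1(G)$. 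Setting $v=e$ gives ordinary continuity of $k$.

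For (4): write $k_0(s,t)=\langle\xi_s,\xi_t\rangle$ via the Kolmogorov decomposition, with $\xi\colon G\to\mathcal{H}$ a map into a Hilbert space. Fix $s$. The map $u\mapsto f(s^{-1}u)\xi_u$ is weakly measurable — for $\eta$ in the dense span of $\{\xi_t:t\in G\}$ the function $u\mapsto\langle\xi_u,\eta\rangle$ is a finite linear combination of the measurable functions $k_0(\cdot,t)$, and this extends to all $\eta\in\mathcal{H}$ by pointwise limits — and its norm is dominated by the integrable function $u\mapsto f(s^{-1}u)\sqrt{k_0(u,u)}$ (finite since $k_0$ is bounded on $\Tube(\{e\})$ over the compact set $sC$ and $k_0(u,u)=\norm{\xi_u}^2\ge0$). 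Hence the weak (Pettis) integral $\zeta_s:=\int_G f(s^{-1}u)\xi_u\,d\mu(u)\in\mathcal{H}$ exists, defined via Riesz representation from the bounded conjugate-linear functional $\eta\mapsto\int_G f(s^{-1}u)\langle\xi_u,\eta\rangle\,d\mu(u)$; a direct computation with the recorded identity then gives $\langle\zeta_s,\zeta_t\rangle=k(s,t)$. So each matrix $(k(s_i,s_j))_{i,j}=(\langle\zeta_{s_i},\zeta_{s_j}\rangle)_{i,j}$ is a Gram matrix, hence positive semidefinite, and $k$ is of positive type. The main work is the uniform estimate in (2) — the change of variables, extracting a compact $\tilde K$ independent of $v$ and $n$, and reducing to continuity of translation in $L^1(G)$; the other delicate point is using the Pettis rather than the Bochner integral in (4), since for a merely measurable kernel $\mathcal{H}$ need not be separable and $u\mapsto\xi_u$ need not be strongly measurable.
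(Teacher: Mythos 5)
Your proposal is correct and follows essentially the same route as the paper: direct tube estimates for (1) and (3), a change of variables pushing the perturbation onto the cut-off function followed by an $L^1$-convergence argument for the uniform continuity in (2), and a Kolmogorov/GNS decomposition with a weakly defined (Riesz/Pettis) integral for (4). The extra care you take with weak measurability of $u\mapsto\xi_u$ and with the closedness of $\Tube(CL_0C)$ only makes explicit what the paper leaves implicit.
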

\begin{proof}
  Observe that the new kernel $k$ is well-defined, because for fixed $s,t\in G$, the function
  \[(v,w)\mapsto f(v)f(w)k_0(sv,tw)\]
  is a bounded measurable function with compact support. We check that $k$ satisfies properties $(1)-(4)$.
  
  \textbf{property (1):} Let $T\subseteq G\times G$ be a tube. Observe that $T_0=T(\supp(f)\times\supp(f))$
  is still a tube. So $k_0$ is bounded on $T_0$, say by $C>0$. For any $(s,t)\in T$ and $v,w\in\supp(f)$,
  we get that $(sv,tw)\in T_0$, hence
  \begin{align*}
    \abs{k(s,t)}&\leq \int_G\int_G f(v)f(w)\abs{k_0(sv,tw)} d\mu(v)d\mu(w)\\
    &\leq C\int_G\int_G f(v)f(w) d\mu(v)d\mu(w)=C.
  \end{align*}
  So $k$ is bounded by on $T$.

  \textbf{property (2):} Suppose that $s_n\rightarrow s$ and $t_n\rightarrow t$ in $G$. Let $U$ be a compact neighborhood of identity.
  We can assume that $s_n\in sU$ and $t_n\in tU$ for all $n\in \N$. By assumption, $k_0$ is bounded on the tube
  $T=\Tube(\supp(f)^{-1}U^{-1}s^{-1}tU\supp(f))$, say by $C>0$. Let $r\in G$ be arbitrary. We compute that
  \begin{align*}
    \abs{k(rs_n,rt_n)-k(rs,rt)}
    &\leq\abs{\int_G\int_G f(v)f(w)k_0(rs_nv,rt_nw)-f(v)f(w)k_0(rsv,rtw)d\mu(v)d\mu(w)}\\
    &\leq\int_G\int_G \abs{f(s_n^{-1}v)f(t_n^{-1}w)-f(s^{-1}v)f(t^{-1}w)}\abs{k_0(rv,rw)}d\mu(v)d\mu(w)\\
    &\leq C\int_G\int_G \abs{f(s_n^{-1}v)f(t_n^{-1}w)-f(s^{-1}v)f(t^{-1}w)}d\mu(v)d\mu(w)\\
  \end{align*}
  The last inequality follows from the fact that $(rv,rw)\in T$ whenever either $s_n^{-1}v,t_n^{-1}w\in \supp(f)$ or
  $s^{-1}v,t^{-1}w\in\supp(f)$. The last line does not depend on $r$ and converges to $0$ by the Lebesgue dominated convergence theorem because $f$ is continuous and has compact support.

  \textbf{property (3):} A simple direct computation shows that the support of $k$ is contained in $\supp(k_0)(\supp(f)\times\supp(f))$.

  \textbf{property (4):} It is well-known that a kernel is of positive type if and only if there is a Hilbert space $H$ and a map $\xi:G\rightarrow H$
  such that $k(s,t)=\langle\xi(s),\xi(t)\rangle$ for all $s,t\in G$. The map $\xi$ can be chosen to be weakly Borel if $k$ is Borel.

  Take such a Borel map $\xi^{0}:G\rightarrow H$ for the kernel $k_0$. Observe that $\xi^{0}$ is bounded because $k_0$ is. So the formula
  \[\varphi_s(\eta)=\int_G f(v)\langle\xi^0(sv),\eta\rangle d\mu(v)\]
  defines a bounded anti-linear functional on $H$. By the Riesz representation theorem, there is a unique vector $\xi(s)$ such that
  $\varphi_s(\eta)=\langle\xi(s),\eta\rangle$ for all $\eta\in H$. It now suffices to observe that
  \[k(s,t)=\langle\xi(s),\xi(t)\rangle.\]
\end{proof}

We are now ready to prove theorem \ref{equA}
\begin{proof}[proof of theorem \ref{equA}]

\textbf{1)$\Rightarrow$ 2):}
Let $K\subseteq G$ be a compact subset and let $\epsilon>0$. By lemma \ref{lem:cont}, we only have to find a map $\eta:G\rightarrow L^1(G)$ that satisfies the conditions in (2$^\prime$), i.e,
condition (2), but $\eta$ is not necessarily continuous.

Since $G$ has property $A$, we find a compact subset $L\subseteq G$ and $\{A_s\}_{s\in G}$ a family of Borel subsets in $G\times\N$ with
$0<\mu'(A_s)<\8$ such that
\begin{itemize}
\item for all $(s,t)\in \Tube(K)$ we have
  \begin{align*}
    \frac{\mu'(A_s\Delta A_t)}{\mu'(A_s\cap A_t)}<\frac\epsilon2,
  \end{align*}
\item$(t,n)\in A_s$ implies $(s,t)\in \Tube(L)$.
\end{itemize}

For $s,t\in G$, we denote $A_{t,s}=(\{s\}\times\N)\cap A_t$. It follows from Tonelli's theorem that
\begin{align*}
\int_G|A_{t,s}|d\mu(s)=\int_{G\times \N} \chi_{A_t}(s,n)d\mu'(s,n)=\mu'(A_t)<\8.
\end{align*}
For each $t\in G$, consider the almost everywhere defined measurable map $\eta_t:G\ra \C$ defined by
\begin{align*}
\eta_t(s)= \frac{|A_{t,s}|}{\mu'(A_t)}.
\end{align*}
It is clear that $0\leq \eta_t\in L^1(G)$ and $\|\eta_t\|_1=1$ for all $t\in G$. Note that
\begin{align*}
\|\eta_s\cdot \mu'(A_s)-\eta_t\cdot \mu'(A_t)\|_1&=\int_G \left|\  |A_{s,x}|-|A_{t,x}|\  \right| d\mu(x)\\
                                               &\leq\int_G |(\{x\}\times \N) \cap (A_s\Delta A_t)| d\mu(x)\\
                                               &=\mu'(A_s\Delta A_t),
\end{align*}
where the last equality follows from Tonelli's theorem. Hence we see that for all $(s,t)\in\Tube(K)$,
\begin{align*}
||\eta_s-\eta_t||_1&\leq \norm{\eta_s-\eta_t\cdot \frac{\mu'(A_t)}{\mu'(A_s)}}_1+\norm{\eta_t\cdot \frac{\mu'(A_t)}{\mu'(A_s)}-\eta_t}_1\\
                 &\leq \frac{\mu'(A_s\Delta A_t)}{\mu'(A_s)}+\abs{\frac{\mu'(A_t)}{\mu'(A_s)}-1}\\
                 &\leq 2 \cdot \frac{\mu'(A_s\Delta A_t)}{\mu'(A_s)}\\
                 &\leq 2 \cdot \frac{\mu'(A_t\Delta A_s)}{\mu'(A_t\cap A_s)}<\varepsilon.
\end{align*}
Note also that if $\eta_t(s)\neq 0$, then $(s,n)\in A_t$ for some $n$, whence $(t,s)\in \Tube(L)$. Hence $\supp \eta_t\subseteq tL$.

\textbf{2)$\Rightarrow$ 1)}: Given a compact subset $K\subseteq G$ and $\epsilon>0$. We choose a small $0<\epsilon'<1$ such that $\frac{6\epsilon'}{2-5\epsilon'}<\epsilon$. By 2) there exist a compact subset $L\subseteq G$ and a map $\eta:G\ra L^1(G)$ such that $||\eta_t||_1=1$, $\supp \eta_t\subseteq tL$ for every $t\in G$ and
\begin{align*}
\sup_{(s,t)\in \Tube(K)}||\eta_s-\eta_t||_1<\epsilon'.
\end{align*}
We identify $\eta_t$ with a representative function $\eta_t:G\rightarrow\C$.
It is not hard to see that we can assume that $\{s\in G:\eta_t(s)\neq 0\}\subseteq tL$ and $\eta_t$ may also be supposed to be non-negative, since $||\,|\eta_s|-|\eta_t|\,||_1\leq ||\eta_s-\eta_t||_1$.

Note that $\mu(L)>0$, for otherwise $||\eta_t||_1=0$ for all $t\in G$. Let $M:=\mu(L)/\epsilon'>0$. For each $t\in G$, we set
\begin{align*}
A_t:=\{(s,n)\in G\times \N: n\leq \eta_t(s)\cdot M\}.
\end{align*}
It is clear that $A_t$ is a Borel subset of $G\times \N$ for each $t\in G$. For every $t\in G$, we define a measurable map $\theta_t:G\ra [0,\8)$ by 
\begin{align*}
\theta_t(s)=\frac{|A_{t,s}|}{M},
\end{align*}
where $A_{t,s}:=\{n\in \N: (s,n)\in A_t\}$. Then $\theta_t$ satisfies the following two relations
\begin{align*}
\mu'(A_t)&=M\cdot ||\theta_t||_1\\
||\theta_t-\eta_t||_1&<\mu(L)/M=\epsilon',
\end{align*}
for all $t\in G$.
It follows that $M(1-\epsilon')<\mu'(A_t)<M(1+\epsilon')$. In particular, $0<\mu'(A_t)<\8$ for each $t\in G$. Moreover,
\begin{align*}
\mu'(A_t\Delta A_s)=\int_G |A_{t,x}\Delta A_{s,x}|d\mu(x)=\int_G ||A_{t,x}|-|A_{s,x}|| d\mu(x)=M\cdot ||\theta_t-\theta_s||_1.
\end{align*}
Hence,
\begin{align*}
\frac{\mu'(A_s\Delta A_t)}{\mu'(A_s\cap A_t)}=\frac{2\mu'(A_s\Delta A_t)}{\mu'(A_s)+\mu'(A_t)-\mu'(A_s\Delta A_t)}=\frac{2||\theta_s-\theta_t||_1}{||\theta_s||_1+||\theta_t||_1-||\theta_s-\theta_t||_1}.
\end{align*}
Since $||\theta_t||_1>1-\epsilon'$ for every $t\in G$ and $||\theta_s-\theta_t||_1<3\epsilon'$ for all $(s,t)\in \Tube(K)$, we see that
\begin{align*}
\frac{\mu'(A_s\Delta A_t)}{\mu'(A_s\cap A_t)}<\frac{6\epsilon'}{2(1-\epsilon')-3\epsilon'}=\frac{6\epsilon'}{2-5\epsilon'}<\epsilon
\end{align*}
for all $(s,t)\in \Tube(K)$.

Finally, if $(s,n)\in A_t$, then $\eta_t(s)\neq 0$. It follows that $(t,s)\in \Tube(L)$.

\textbf{2)$\Rightarrow$ 3)}: Recall that there is a linear isometric embedding $I: L^1(G)\hookrightarrow C_0(G)^*$ given by
\begin{align*}
I(f)(g)=\int_G gfd\mu.
\end{align*} 
Given a compact subset $K\subseteq G$ and $\epsilon>0$, there exist a compact subset $L\subseteq G$ and a continuous map $\eta:G\ra L^1(G)$ such that $||\eta_t||_1=1$, $\supp\eta_t\subseteq tL$ for every $t\in G$ and
\begin{align*}
\sup_{(s,t)\in \Tube(K)}||\eta_s-\eta_t||_1<\epsilon.
\end{align*}
$\eta_t$ may be supposed to be non-negative (since $|||\eta_s|-|\eta_t|||_1\leq ||\eta_s-\eta_t||_1$). Define the map \hbox{$\nu=I\circ \eta: G\ra C_0(G)^*_+$}, which is obviously a weak-$*$ continuous map with $||\nu_t||=1$. Let $g\in C_0(G)$ be such that $g|_{tL}=0.$ Then
\begin{align*}
\nu_t(g)=I(\eta_t)(g)=\int_{tL} g\eta_t d\mu=0,
\end{align*}
i.e., $\supp \nu_t\subseteq tL$. Finally,
\begin{align*}
\sup_{(s,t)\in \Tube(K)}||\nu_s-\nu_t||=\sup_{(s,t)\in \Tube(K)}||I(\eta_s-\eta_t)||=\sup_{(s,t)\in \Tube(K)}||\eta_s-\eta_t||_1<\epsilon.
\end{align*}

\textbf{3)$\Rightarrow$ 2)}: Let $f$ be a cut-off function for $G$. Using convolution we define a linear contraction $T_f:C_0(G)^*\rightarrow L^1(G)$ by
$T_f(\nu)(s)=\nu(f_s)$
where $f_s$ is defined by $f_s(t)=f(s^{-1}t)$ for $s,t\in G$. Indeed, this is clearly a linear map, and the following computation shows that $T$ is a contraction. Moreover, $T$ is isometric when
restricted to the positive cone $C_0(G)^\ast_+$.
\begin{align*}
||T_f(\nu)||_1=\int_G \abs{\nu(f_s)}d\mu(s)\leq\int_G\int_G f(s^{-1}t)d\abs{\nu}(t)d\mu(s)=\int_G\int_G f(t^{-1}s)d\mu(s)d\abs{\nu}(t)=||\nu||.
\end{align*}
Observe that the inequality above becomes an equality if $\nu$ is positive.
Given a compact subset $K\subseteq G$ and $\epsilon>0$, there exist a compact subset $L\subseteq G$ and a weak-$*$ continuous map $\nu:G\ra C_0(G)^*_+$ such that $||\nu_t||=1$, $\supp \nu_t\subseteq tL$ for every $t\in G$ and
\begin{align*}
\sup_{(s,t)\in \Tube(K)}||\nu_s-\nu_t||<\epsilon.
\end{align*}
Define $\eta:G\ra L^1(G)$ by the composition
\begin{align*}
G \stackrel{\nu} \ra C_0(G)^*_+\stackrel{T_f}\hookrightarrow L^1(G).
\end{align*}
It is clear that $||\eta_t||_1=1$ and $\supp\eta_t\subseteq t(L\cdot \supp f)$ for every $t\in G$. Moreover, we see that
\begin{align*}
\sup_{(s,t)\in \Tube(K)}||\eta_s-\eta_t||_1=\sup_{(s,t)\in \Tube(K)}||T_f(\nu_s-\nu_t)||_1\leq\sup_{(s,t)\in \Tube(K)}||\nu_s-\nu_t||<\epsilon.
\end{align*}

Finally, we show that $\eta$ is continuous. Let $t_n\ra t$, we want to show that $||\nu_{t_n}*f-\nu_t*f||_1\ra 0$. We can assume that $\{t_n\}_{n\in \N}\subseteq t\cdot \text{supp}\ f$. Since $\nu$ is weak-$*$ continuous, we get
\begin{align*}
(\nu_{t_n}*f)(s)=\nu_{t_n}(f_s)\ra \nu_t(f_s)=(\nu_t*f)(s),
\end{align*}
for all $s\in G$. Note that $\text{supp}\ (\nu_{t_n}*f)\subseteq t_n(L\cdot \text{supp}\ f)\subseteq t\cdot \text{supp}\ f\cdot L\cdot \text{supp}\ f$ and
\begin{align*}
(\nu_{t_n}*f)(s)=\nu_{t_n}(f_s)=\int_G f_s(x)d\nu_{t_n}(x)\leq ||f||_\8||\nu_{t_n}||=||f||_\8.
\end{align*}
It follows that 
\begin{align*}
(\nu_{t_n}*f)\leq \chi_{t\cdot \text{supp}\ f\cdot L\cdot \text{supp}\ f} ||f||_\8\in L^1(G)\  a.e. 
\end{align*}
We complete the proof by Lebesgue's dominated convergence theorem.\\


\textbf{2)$\Rightarrow$ 4)}:
Let $\eta: G\ra L^1(G)$ be a map as in (2). For each $t\in G$, define $\xi_t=|\eta_t|^{1/2}$. Then
\begin{align*}
||\xi_t-\xi_s||_2^2&=\int_{x\in G} |\xi_t(x)-\xi_s(x)|^2 d\mu(x)\\
                             &\leq \int_{x\in G}|\xi_t(x)^2-\xi_s(x)^2| d\mu(x)\\
                             &=\int_{x\in G} ||\eta_t(x)|-|\eta_s(x)|| d\mu(x)\\
                             &\leq ||\eta_t-\eta_s||_1.
\end{align*}
Now, the rest of the proof is obvious.

\textbf{4)$\Rightarrow$ 2)}: Let $\xi:G\ra L^2(G)$ be a map as in (4). For each $t\in G$, define $\eta_t=|\xi_t|^2$. Then by the Cauchy-Schwarz inequality, one has
\begin{align*}
||\eta_t-\eta_s||_1&=\int_{x\in G} ||\xi_t(x)|^2-|\xi_s(x)|^2| d\mu(x)\\
                   &=\int_{x\in G} (|\xi_t(x)|+|\xi_s(x)|)||\xi_t(x)|-|\xi_s(x)|| d\mu(x)\\
                   &\leq |||\xi_t|+|\xi_s|||_2 |||\xi_t|-|\xi_s|||_2\\
                   &\leq 2||\xi_t-\xi_s||_2.
\end{align*}
Now, it is not hard to complete the proof.

\textbf{4)$\Rightarrow$ 5)}:
Given a compact subset $K\subseteq G$ and $\varepsilon>0$, 
let $\xi:G\ra L^2(G)$ be a continuous map as in (4). We identify $\xi_t\in L^2(G)$ with a representative $\xi_t:G\rightarrow \C$. We may assume that $\{s\in G:\xi_t(s)\neq 0\}\subseteq tL$.
Then we define a continuous positive type kernel $k:G\times G\ra \C$ by the formula
\begin{align*}
k(s,t)=\ip{\xi_s}{\xi_t}.
\end{align*}
It is clear that $\supp k\subseteq \Tube(L\cdot L^{-1})$ and we compute that
\begin{align*}
\sup_{(s,t)\in \Tube(K)} |k(s,t)-1|=\sup_{(s,t)\in \Tube(K)}|\ip{\xi_s-\xi_t}{\xi_t}|\leq \sup_{(s,t)\in \Tube(K)} ||\xi_s-\xi_t||_2||\xi_t||_2<\epsilon.
\end{align*}

\textbf{5)$\Rightarrow$ 4)}: Let a compact subset $K\subseteq G$ and $0<\epsilon<1/2$ be given. Let $f$ be a cut-off function for $G$. Observe that $\supp f\cdot (K\cup \{e\})\cdot \supp f$ is a compact subset of $G$. By (5), there exist a compact subset $L\subseteq G$ and a continuous positive type kernel $k_0$ on $G$ such that $\supp k_0\subseteq \Tube(L)$ and
\begin{align*}
\sup\{|k_0(s,t)-1|: (s,t)\in \Tube(\supp f\cdot (K\cup \{e\})\cdot \supp\ f)\}<\epsilon.
\end{align*}
Observe that $k_0$ is bounded because it is of positive type and $k_0(s,s)<1+\varepsilon$ for all $s\in G$.
It follows from lemma \ref{lem:kernel} that $k:G\times G\ra \C$ given by
\begin{align*}
k(s,t)=\int_G\int_G f(v)k_0(sv,tw)f(w)d\mu(w)d\mu(v)
\end{align*}
is a continuous bounded positive type kernel whose support is still a tube, say $\supp(k)\subseteq \Tube(L^\prime)$.
If $(s,t)\in \Tube(K\cup \{e\})$, then
\begin{align*}
|k(s,t)-1|&=\abs{\int_G\int_G f(v)k_0(sv,tw)f(w)d\mu(w)d\mu(v)-\int_G f(v)d\mu(v)\int_G f(w)d\mu(w)}\\
          &\leq \int_{\supp  f}\int_{\supp  f} f(w) f(v)|k_0(sv,tw)-1|d\mu(v)d\mu(w)\\
          &\leq \sup\{|k_0(x,y)-1|:(x,y)\in \Tube(\supp f\cdot (K\cup \{e\})\cdot \supp f)\}\\
          &<\epsilon.
\end{align*}

Let $T_{k_0}$ be the integral operator on $L^2(G)$, which is induced by $k_0$, so we define $T_{k_0}$ by \hbox{$T_{k_0}(\xi)(s)=\int_G k_0(s,t)\xi(t)d\mu(t)$}. Note that $T_{k_0}$ is positive and bounded, and that
\begin{align*}
k(s,t)=\ip{T_{k_0}f_t}{f_s},
\end{align*}
where $f_t(x)=f(t^{-1}x)$.

Let $p$ be a polynomial such that $0\leq p(t)$ and $|p(t)^2-t|<\epsilon/||f||^2_2$ for $t\in[0,||T_{k_0}||]$. Define a continuous map $\eta:G\ra L^2(G)$ by
$\eta_t=p(T_{k_0})f_t$.
Note that
\begin{align*}
|\ip{\eta_t}{\eta_s}-k(s,t)|&=|\ip{p(T_{k_0})f_t}{p(T_{k_0})f_s}-k(s,t)|\\
                            &=|\ip{p^2(T_{k_0})f_t}{f_s}-\ip{T_{k_0}f_t}{f_s}|\\
                            &\leq ||p^2(T_{k_0})-T_{k_0}||||f_t||_2||f_s||_2\\
                            &<\epsilon,
\end{align*}
for all $s,t\in G$. It follows that $|\ip{\eta_t}{\eta_s}-1|\leq |\ip{\eta_t}{\eta_s}-k(s,t)|+|k(s,t)-1|<2\epsilon$ for all \hbox{$(s,t)\in \Tube(K\cup \{e\})$}, which implies that $1-2\epsilon<\text{Re}\ip{\eta_t}{\eta_s}<2\epsilon+1$ for all $(s,t)\in \Tube(K\cup \{e\})$.

Since $\epsilon< 1/2$, we see that $\sqrt{1+2\epsilon}>||\eta_t||_2>\sqrt{1-2\epsilon}>0$. We define a continuous map $\xi:G\ra L^2(G)$ by $\xi_t=\eta_t/||\eta_t||_2$.
This map $\xi$ satisfies
\begin{align*}
1-\text{Re}\ip{\xi_t}{\xi_s}=1-\frac{\text{Re}\ip{\eta_t}{\eta_s}}{\ip{\eta_t}{\eta_t}^{1/2}\ip{\eta_s}{\eta_s}^{1/2}}\leq 1-\frac{1-2\epsilon}{1+2\epsilon}=\frac{4\epsilon}{1+2\epsilon}<4\epsilon
\end{align*}
for all $(s,t)\in \Tube(K\cup \{e\})$. Therefore we see that $||\xi_s-\xi_t||_2=\sqrt{2-2\text{Re}\ip{\xi_t}{\xi_s}}< \sqrt{8\epsilon}$ for all $(s,t)\in \Tube(K)$. Finally, if $p$ is of degree $d$, then it is not hard to see that
\begin{align*}
\supp \xi_t \subseteq t\cdot \supp f\cdot((L^d)^{-1}\cup \cdots \cup L^{-1}\cup \{e\}).
\end{align*} 
\end{proof}

We end this section by showing that property A is equivalent to amenability at infinity. Recall that a locally compact group $G$ is said to be amenable at infinity if there exists a topologically amenable action (in the sense of \cite{CA02}) of $G$ on some compact Hausdorff space $X$. In the discrete case, it is known that $G$ is exact if and only if $G$ is amenable at infinity if and only if the action of $G$ on its Stone-\v{C}ech compactification is topologically amenable if and only if $G$ has property A.
In the locally compact case, we have to replace the Stone-\v{C}ech compactification by the space $\beta^u(G)$
that is defined in the following way. $\beta^u(G)$ is the universal compact Hausdorff left $G$-space equipped with a continuous $G$-equivariant inclusion of $G$ as an open dense subspace, which has the following property: any (continuous) $G$-equivariant map from $G$ into a compact Hausdorff left $G$-space $K$ extends uniquely to a continuous $G$-equivariant map from $\beta^u(G)$ into $K$.
We can identify $C(\beta^u(G))$ with the C$^\ast$-algebra of bounded left-uniform continuous functions on $G$, i.e. the algebra of all bounded continuous functions $f$ on $G$ such that
$f(t^{-1}s)-f(s)$ tends to 0 uniformly as $t$ tends to the unit element of $G$. Anantharaman-Delaroche showed in \cite[Proposition 3.4]{CA02} that a \lcsc group $G$ is amenable at infinity
if and only if its action on $\beta^u(G)$ by translation is topologically amenable.

As in \cite{CA02}, we denote by $\theta$ the homeomorphism of $G\times G$ that is given by $\theta(s,t)=(s^{-1},s^{-1}t)$. Let $C_{b,\theta}(G\times G)$ be the algebra of bounded continuous functions $f$ on $G\times G$ such that $f\circ \theta$ has a continuous extension to $\beta^u(G)\times G$. In fact, we have the following characterization of the continuous functions $f:G\times G\rightarrow \C$ such that $f\circ\theta$ has
a continuous extension to $\beta^u(G)\times G$.
\begin{observation}\label{observation}
Let $f:G\times G\ra \C$ be a (continuous) function. Then $f\circ \theta$ extends to a continuous function on $\beta^u(G)\times G$ if and only if $f$ satisfies the following two conditions:
\begin{align*}
&\sup_{v\in G}|f(v,vt)|<\infty&&\qquad\text{for all }t\in G\\
&\sup_{v\in G}|f(vs_n,vt_n)-f(vs,vt)|\ra 0&&\qquad\text{for all }s_n\ra s\text{ and }t_n\ra t.
\end{align*}
\end{observation}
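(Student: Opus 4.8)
The plan is to translate the statement ``$f\circ\theta$ extends to a continuous function on $\beta^u(G)\times G$'' into a separate condition on each of the two variables, and then read off the two displayed formulas. The translation rests on one elementary fact about the compact Hausdorff space $\beta^u(G)$: for an arbitrary topological space $Z$, a function $\Phi\colon\beta^u(G)\times Z\to\C$ is jointly continuous if and only if $z\mapsto\Phi(\,\cdot\,,z)$ is a continuous map from $Z$ into $\bigl(C(\beta^u(G)),\|\cdot\|_\infty\bigr)$; I would prove this by the standard finite--subcover argument. Combining this with the identification $C(\beta^u(G))\cong\mathrm{LUC}_b(G)$ recalled above --- here $\mathrm{LUC}_b(G)=\{h\in C_b(G):\sup_{s}|h(r^{-1}s)-h(s)|\to0\text{ as }r\to e\}$, on which $G$ acts isometrically by $(L_r h)(s)=h(r^{-1}s)$, each orbit map $g\mapsto L_g h$ being norm continuous --- one obtains: writing $\phi_t(s):=(f\circ\theta)(s,t)=f(s^{-1},s^{-1}t)$, the function $f\circ\theta$ extends to $\beta^u(G)\times G$ if and only if (P) $\phi_t\in\mathrm{LUC}_b(G)$ for every $t\in G$, and (Q) $t\mapsto\phi_t$ is continuous from $G$ into $\bigl(\mathrm{LUC}_b(G),\|\cdot\|_\infty\bigr)$. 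Since $G$ is metrizable, sequences suffice to test (Q) and, below, to test convergence $r\to e$.

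For the ``if'' direction, assume the two displayed conditions. The map $\phi_t$ is continuous because $f$ is, and the substitution $v=s^{-1}$ turns $\sup_s|\phi_t(s)|$ into $\sup_v|f(v,vt)|<\infty$ (first condition) and turns $\|L_r\phi_t-\phi_t\|_\infty=\sup_s|f(s^{-1}r,s^{-1}rt)-f(s^{-1},s^{-1}t)|$ into $\sup_v|f(vr,vrt)-f(v,vt)|$, which tends to $0$ as $r\to e$ by the second condition (take $s_n=r\to e$, $t_n=rt\to t$); hence (P) holds. Similarly the second condition with $s_n\equiv e$ gives $\|\phi_{t_n}-\phi_t\|_\infty=\sup_v|f(v,vt_n)-f(v,vt)|\to0$ whenever $t_n\to t$, i.e.\ (Q). Thus $f\circ\theta$ extends; explicitly, $\phi_t$ extends to $\widehat\phi_t\in C(\beta^u(G))$ and $F(\omega,t):=\widehat\phi_t(\omega)$ is the continuous extension by the fact above.

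For the ``only if'' direction, let $F\in C(\beta^u(G)\times G)$ restrict to $f\circ\theta$. The first displayed condition is immediate, since $\sup_v|f(v,vt)|=\sup_{s\in G}|F(s,t)|\le\|F(\,\cdot\,,t)\|_\infty<\infty$. For the second, from $f\circ\theta=F$ on $G\times G$ one gets $f(a,b)=F(a^{-1},a^{-1}b)$, hence $f(vs_n,vt_n)=F(s_n^{-1}v^{-1},s_n^{-1}t_n)$; substituting $w=v^{-1}$ and enlarging the supremum from $G$ to $\beta^u(G)$ gives, with $(L_g h)(\omega)=h(g^{-1}\omega)$,
\[
\sup_{v}\bigl|f(vs_n,vt_n)-f(vs,vt)\bigr|\ \le\ \bigl\|\,L_{s_n}\!\bigl(F(\,\cdot\,,s_n^{-1}t_n)\bigr)-L_{s}\!\bigl(F(\,\cdot\,,s^{-1}t)\bigr)\,\bigr\|_\infty ,
\]
so it suffices to prove that $\Psi(g,t):=L_g\bigl(F(\,\cdot\,,g^{-1}t)\bigr)\in C(\beta^u(G))$ depends norm continuously on $(g,t)$.

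I expect this last point to be the main obstacle, because both the point of $\beta^u(G)$ at which we evaluate and the translation we apply move simultaneously. The resolution is to split $\Psi(g,t)-\Psi(g_0,t_0)$ by the triangle inequality into $L_{g}\!\bigl(F(\,\cdot\,,g^{-1}t)-F(\,\cdot\,,g_0^{-1}t_0)\bigr)$ and $(L_{g}-L_{g_0})\bigl(F(\,\cdot\,,g_0^{-1}t_0)\bigr)$; the first has norm $\|F(\,\cdot\,,g^{-1}t)-F(\,\cdot\,,g_0^{-1}t_0)\|_\infty\to0$ because $(g,t)\mapsto g^{-1}t$ is continuous and $t\mapsto F(\,\cdot\,,t)$ is norm continuous by the compactness fact, while the second tends to $0$ by norm continuity of the orbit map $g\mapsto L_g h$ on $C(\beta^u(G))\cong\mathrm{LUC}_b(G)$. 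The remaining ingredients --- the compactness fact, the substitutions $v=s^{-1}$ and $w=v^{-1}$, and the metrizability of $G$ --- are routine, so once $\Psi$ is handled the proof is complete.
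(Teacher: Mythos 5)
The paper states this Observation without proof, so there is no argument of the authors' to compare against; judged on its own terms, your proof is correct and complete. The reduction of joint continuity on $\beta^u(G)\times G$ to norm continuity of $t\mapsto F(\cdot,t)\in C(\beta^u(G))$ via the finite-subcover argument is valid (it is exactly where compactness of $\beta^u(G)$ enters), and the identification $C(\beta^u(G))\cong\mathrm{LUC}_b(G)$ correctly converts condition (P) into boundedness of $v\mapsto f(v,vt)$ (your first display) plus left-uniform continuity of $\phi_t$, the latter being the special case $s_n\to e$, $t_n=s_nt\to t$ of the second display. The only delicate point in the converse, the simultaneous motion of the translation $L_{s_n}$ and the parameter $s_n^{-1}t_n$, is handled cleanly by your triangle-inequality split together with the facts that each $L_g$ is isometric and that orbit maps $g\mapsto L_gh$ are norm continuous precisely because $h\in\mathrm{LUC}_b(G)$; the appeal to sequences is justified by metrizability of the second countable group $G$. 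I find no gap.
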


In \cite{CA02}, Anantharaman-Delaroche showed the following characterization of \lcsc groups that are amenable at infinity.
\begin{theo}[{\cite[Proposition 3.4 and 3.5]{CA02}}]
  Let $G$ be a \lcsc group. Then the following are equivalent.
  \begin{itemize}
    \item[$1)$] $G$ is amenable at infinity, i.e. there exists a topologically amenable action of $G$ on a compact Hausdorff space.
    \item[$2)$] the action of $G$ on $\beta^u(G)$ is topologically amenable.
    \item[$3)$] There exists a net $(k_i)$ of positive type kernels in $C_{b,\theta}(G\times G)$ with support in tubes such that $\lim_i k_i=1$, uniformly on tubes.
  \end{itemize}
\end{theo}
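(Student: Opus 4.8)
This is \cite[Proposition 3.4 and 3.5]{CA02}, so we only sketch the argument. The plan is to prove the three implications $(2)\Rightarrow(1)$, $(1)\Rightarrow(2)$ and $(2)\Leftrightarrow(3)$. The implication $(2)\Rightarrow(1)$ is immediate: $\beta^u(G)$ is a compact Hausdorff left $G$-space, so topological amenability of the translation action on it is a special case of $(1)$. The content of the theorem is therefore the universality statement $(1)\Rightarrow(2)$ and the passage, in $(2)\Leftrightarrow(3)$, between topological amenability of the transformation groupoid $\beta^u(G)\rtimes G$ and the existence of approximating positive type kernels.

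For $(1)\Rightarrow(2)$, suppose $G\actson X$ is a topologically amenable action on a compact Hausdorff space and fix a point $x_0\in X$. The orbit map $\iota\colon G\ra X$, $\iota(g)=g\cdot x_0$, is continuous and $G$-equivariant, so by the defining universal property of $\beta^u(G)$ it extends to a continuous $G$-equivariant map $\bar\iota\colon\beta^u(G)\ra X$. It then suffices to observe that topological amenability of $G\actson X$ passes to any compact Hausdorff $G$-space $Y$ that admits a continuous $G$-equivariant map $\phi\colon Y\ra X$: pulling back along $\phi$ a net $(g_i)$ of continuous nonnegative compactly supported functions on $X\times G$ witnessing topological amenability (so $\int_G g_i(x,t)\,d\mu(t)\ra1$ uniformly in $x$ and $\int_G|g_i(s\cdot x,st)-g_i(x,t)|\,d\mu(t)\ra0$ uniformly for $x\in X$ and $s$ in compact subsets of $G$) yields a net with the same properties on $Y\times G$, using that $\phi$ is continuous and equivariant and $Y$ is compact. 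Taking $Y=\beta^u(G)$ and $\phi=\bar\iota$ finishes $(1)\Rightarrow(2)$.

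For $(2)\Leftrightarrow(3)$, we would invoke the general characterization of amenability for a locally compact $\sigma$-compact groupoid with Haar system: it is topologically amenable if and only if there is a net of continuous positive type functions on the groupoid, each with support proper over the unit space, converging to $1$ uniformly on compact subsets \cite{CA02}. Applied to $\mathcal G=\beta^u(G)\rtimes G$ with the Haar system $\delta_\omega\times\mu$, this is essentially $(3)$; what remains is a translation via the homeomorphism $\theta(s,t)=(s^{-1},s^{-1}t)$, which identifies $G\times G$ with the open dense subgroupoid $G\rtimes G\subseteq\mathcal G$. Under this identification, Observation \ref{observation} says precisely that the continuous kernels on $G\times G$ whose $\theta$-transform extends continuously to $\mathcal G$ are the elements of $C_{b,\theta}(G\times G)$; a subset of $G\times G$ is a tube exactly when the corresponding subset of $G\rtimes G$ is proper over the unit space; and a compact subset of $\mathcal G$ meets $G\rtimes G$ inside such a tube, so that ``uniformly on compacta of $\mathcal G$'' becomes ``uniformly on tubes''. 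Restricting the groupoid net to $G\rtimes G$ and transporting it back along $\theta$ gives $(2)\Rightarrow(3)$. Conversely, given $(k_i)$ as in $(3)$, membership in $C_{b,\theta}(G\times G)$ lets each $k_i$ be extended to a continuous positive type function $\tilde k_i$ on $\mathcal G$ with support proper over $\beta^u(G)$, and we recover an amenability net by a GNS-type construction over $\beta^u(G)$: write $\tilde k_i(\gamma)=\langle\xi_i^{r(\gamma)},\gamma\cdot\xi_i^{s(\gamma)}\rangle$ for a continuous field $\xi_i=(\xi_i^\omega)_{\omega\in\beta^u(G)}$ of (almost) unit vectors, and take $g_i=|\xi_i|^2$ after a renormalization. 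This gives $(3)\Rightarrow(2)$.

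The main obstacle is the groupoid-theoretic input in $(2)\Leftrightarrow(3)$, most of all the direction $(3)\Rightarrow(2)$: from a single positive type function on $\mathcal G$ one must produce a continuous section $\xi_i$ of a continuous field of Hilbert spaces over $\beta^u(G)$ realizing $\tilde k_i$, control the support of $\xi_i$ in terms of that of $\tilde k_i$, and perform the renormalization that turns positive-definiteness together with ``$\ra1$ uniformly on tubes'' into the precise quasi-invariance inequalities defining topological amenability of the groupoid. By contrast, matching ``support in a tube'' with properness over the unit space and checking (via Observation \ref{observation}) which functions on $G\times G$ extend to $\mathcal G$ is routine coarse-geometric bookkeeping. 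Full details are in \cite[Proposition 3.4 and 3.5]{CA02}.
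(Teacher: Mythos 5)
The paper gives no proof of this theorem at all --- it is quoted verbatim from \cite[Propositions 3.4 and 3.5]{CA02} --- and your sketch is a faithful outline of Anantharaman-Delaroche's argument there: the universal property of $\beta^u(G)$ applied to an orbit map plus pull-back of amenability along continuous equivariant maps for $(1)\Rightarrow(2)$, and the positive-type-function characterization of groupoid amenability transported through $\theta$ for $(2)\Leftrightarrow(3)$. Your deferral of the genuinely hard groupoid-theoretic input to \cite{CA02} matches exactly what the paper itself does, so there is nothing to correct.
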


The uniform continuity property in observation \ref{observation} above is precisely the one we obtained in lemma \ref{lem:kernel}.
It is now clear that in point (5) of theorem \ref{equA}, we may assume that the kernel $k$ is contained in $C_{b,\theta}(G\times G)$. This proves the following result
\begin{Corr}
  \label{AAmen}
  A \lcsc group $G$ is
  amenable at infinity if and only if $G$ has property A.
\end{Corr}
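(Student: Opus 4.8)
The plan is to compare condition (5) of Theorem \ref{equA} with condition (3) of Anantharaman-Delaroche's theorem. Both assert the existence of positive type kernels, supported in tubes, that approximate $1$ uniformly on tubes; the only apparent gap is that Anantharaman-Delaroche additionally requires the kernels to lie in $C_{b,\theta}(G\times G)$, whereas Theorem \ref{equA}(5) only asks for continuity. So the entire content of the proof is to upgrade an arbitrary continuous positive type kernel from (5) to one in $C_{b,\theta}(G\times G)$, without destroying the tube-support or the approximation estimate.

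First I would recall, from Observation \ref{observation}, that membership in $C_{b,\theta}(G\times G)$ is equivalent to the two conditions
\[
\sup_{v\in G}\abs{f(v,vt)}<\infty\quad\text{for all }t\in G,\qquad
\sup_{v\in G}\abs{f(vs_n,vt_n)-f(vs,vt)}\ra 0\quad\text{whenever }s_n\ra s,\ t_n\ra t.
\]
Then I would invoke Lemma \ref{lem:kernel}: given a continuous positive type kernel $k_0$ with $\supp k_0\subseteq\Tube(L)$ and $\sup_{(s,t)\in\Tube(K')}\abs{k_0(s,t)-1}<\epsilon$ for a suitably enlarged compact set $K'$, the convolved kernel $k(s,t)=\int_G\int_G f(v)f(w)k_0(sv,tw)\,d\mu(v)d\mu(w)$ is again continuous and positive type (parts (2) and (4)), is bounded on every tube (part (1)), and has tube support (part (3)); moreover part (2) gives exactly the uniform continuity $\sup_{v\in G}\abs{k(vs_n,vt_n)-k(vs,vt)}\ra 0$. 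Since $k$ is bounded on the tube $\Tube(\overline{\{t\}})$, the first condition of Observation \ref{observation} holds as well. Hence $k\in C_{b,\theta}(G\times G)$. The approximation estimate $\sup_{(s,t)\in\Tube(K)}\abs{k(s,t)-1}<\epsilon$ is checked exactly as in the $5)\Rightarrow 4)$ step of the proof of Theorem \ref{equA}: writing $1=\int_G\int_G f(v)f(w)\,d\mu(v)d\mu(w)$, one bounds $\abs{k(s,t)-1}$ by $\int\int f(v)f(w)\abs{k_0(sv,tw)-1}\,d\mu(v)d\mu(w)$, and $(sv,tw)$ ranges over $\Tube(\supp f\cdot K\cdot\supp f)$, so choosing $K'=\supp f\cdot K\cdot \supp f$ in the application of (5) makes this supremum less than $\epsilon$.

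Running this for a sequence (or net) of pairs $(K,\epsilon)$ — using that $G$ is second countable, so an exhausting sequence of compact sets suffices, with $\epsilon\ra 0$ — produces a net $(k_i)$ of positive type kernels in $C_{b,\theta}(G\times G)$, supported in tubes, with $k_i\ra 1$ uniformly on tubes. That is condition (3) of Anantharaman-Delaroche's theorem, which is equivalent to $G$ being amenable at infinity. Conversely, condition (3) of that theorem is a strengthening of Theorem \ref{equA}(5), so amenability at infinity trivially implies property A. I do not expect a serious obstacle here: every ingredient is already in place, and the only point requiring a little care is bookkeeping of the compact sets — making sure the tube on which $k_0$ is forced close to $1$ is enlarged by $\supp f$ on both sides so that, after convolution, the estimate survives on $\Tube(K)$. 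This is precisely the manipulation already carried out in the $5)\Rightarrow 4)$ argument, so it can be cited rather than repeated.
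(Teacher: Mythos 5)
Your proposal is correct and follows essentially the same route as the paper: convolve the kernel from Theorem \ref{equA}(5) with a cut-off function, use Lemma \ref{lem:kernel} to obtain the uniform continuity and boundedness conditions of Observation \ref{observation} (hence membership in $C_{b,\theta}(G\times G)$), and match the result against condition (3) of Anantharaman-Delaroche's theorem, the converse being immediate. The paper merely states this in two sentences; your write-up supplies the bookkeeping of compact sets that the paper leaves implicit, and does so correctly.
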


One of our motivations to study groups with property A is that for such groups, the Baum-Connes assembly map with coefficients is split-injective.
This was proven first by Higson \cite[Theorem 1.1]{H00} in the discrete case. Later, Chabert, Echterhoff and Oyono-Oyono showed \cite[Theorem 1.9]{CEO04} that this is still true for \lcsc groups
that are amenable at infinity. Since we have just shown that a \lcsc group with property A is amenable at infinity, this is still true for groups with property A.
\begin{Corr}
If $G$ is a locally compact, second countable, Hausdorff group which has property A, then the Baum-Connes assembly map with coefficients for $G$ is split-injective.
\end{Corr}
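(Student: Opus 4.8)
The plan is to obtain this corollary as an immediate consequence of two facts established above and in the literature. First, Corollary \ref{AAmen} says that a \lcsc group $G$ with property A is amenable at infinity, i.e.\ it admits a topologically amenable action on some compact Hausdorff space. Second, Chabert, Echterhoff and Oyono-Oyono proved in \cite[Theorem 1.9]{CEO04} that for every \lcsc group that is amenable at infinity, the Baum--Connes assembly map with coefficients is split-injective. Chaining these two statements together yields the corollary, so the argument is essentially formal.

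In slightly more detail, I would first invoke Corollary \ref{AAmen} to produce a compact Hausdorff $G$-space $X$ carrying a topologically amenable $G$-action; by \cite[Proposition 3.4]{CA02} one may even take $X=\beta^u(G)$. One then feeds this into the machinery of \cite{CEO04}: their going--down functor, combined with the permanence properties of topological amenability, reduces split-injectivity of the assembly map with coefficients for $G$ to the corresponding statement for the transformation groupoid $X\rtimes G$, which holds because that groupoid is topologically amenable. Since all of this input is available the moment we know that $G$ has property A, there is no real obstacle to overcome here — the corollary is a direct application of Corollary \ref{AAmen} and the cited theorem of Chabert, Echterhoff and Oyono-Oyono.

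For completeness I note an alternative route, more in the spirit of the rest of the paper: a group with property A embeds uniformly into Hilbert space (by Corollary \ref{AAmen} together with \cite{CA02}), and then Higson's descent technique, adapted to the \lcsc setting and combined with the going--down functor of Chabert, Echterhoff and Oyono-Oyono, gives split-injectivity of the assembly map with coefficients for any \lcsc group that embeds uniformly into Hilbert space. The genuinely technical work in that approach — transporting Higson's descent argument from the discrete to the locally compact case and verifying its compatibility with the going--down functor — is the one real difficulty, but it is carried out elsewhere in the paper and is not needed for the present corollary, for which the short argument through \cite[Theorem 1.9]{CEO04} already suffices.
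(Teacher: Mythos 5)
Your proposal is correct and follows exactly the paper's argument: Corollary \ref{AAmen} shows that property A implies amenability at infinity, and then \cite[Theorem 1.9]{CEO04} gives split-injectivity of the Baum--Connes assembly map with coefficients for groups amenable at infinity. The extra detail and the alternative route via uniform embeddability are not needed; the two-step deduction is all the paper does here.
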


\section{Uniform embeddability into Hilbert space}
In this section we study groups that admit a uniform embedding into Hilbert space, in the sense of Gromov \cite{G93}, see definition \ref{CA}.
As a consequence of corollary \ref{AAmen} in the previous section, all
groups with property A embed uniformly into Hilbert space. In fact, we show that uniform embeddability into Hilbert space is equivalent to the existence of a Haagerup action on a compact Hausdorff space.
We say that an action $G\actson X$ on a compact Hausdorff space has the Haagerup property if the associated transformation groupoid has a continuous proper conditionally negative type function.
In the previous section, we mentioned that the Baum-Connes assembly map
with coefficients is split-injective for groups with property A. In theorem \ref{thm:BC}, we extend this result to groups that embed uniformly into Hilbert space.

In \cite{G93}, Gromov introduced the notion of a uniform embedding of a metric space into another one. On any \lcsc group $G$, there is a proper left-invariant metric $d$, and this
metric is unique up to coarse equivalence, see \cite{HP06} and \cite{S74}. This gives a well-defined notion of a uniform embedding of a \lcsc group into Hilbert space. However, for the purpose of
this paper, we use the following equivalent definition, that was first given by Anantharaman-Delaroche in \cite{CA02}.

\begin{defi}\label{CA}
Let $G$ be a locally compact, second countable, Hausdorff topological group. A map $u$ from $G$ into a Hilbert space $H$ is said to be a uniform embedding if $u$ satisfies the following two conditions:

a) for every compact subset $K$ of $G$ there exists $R>0$ such that
\begin{align*}
(s,t)\in \Tube(K)\Rightarrow ||u(s)-u(t)||\leq R;
\end{align*}

b) for every $R>0$ there exists a compact subset $K$ of $G$ such that
\begin{align*}
||u(s)-u(t)||\leq R \Rightarrow (s,t)\in \Tube(K).
\end{align*}

  We say that a \lcsc group $G$ embeds uniformly into Hilbert space (or admits a uniform embedding into Hilbert space) if there exists a Hilbert space $H$ and a uniform embedding $u:G\rightarrow H$.
\end{defi}

Anantharaman-Delaroche showed in \cite{CA02} that \lcsc groups that are amenable at infinity, embed uniformly into Hilbert space.
As a consequence of corollary \ref{AAmen}, we obtain the following:
\begin{pro}[\cite{CA02}, Proposition 3.7]
If a \lcsc group $G$ has property A, then $G$ admits a uniform embedding into Hilbert space.
\end{pro}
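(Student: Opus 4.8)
The plan is to deduce the statement directly from the $L^2$-characterisation of property~A, i.e. condition~(4) of Theorem~\ref{equA}, by the classical Yu-type construction of assembling countably many approximations into one map into a large Hilbert space. Since $G$ is \lcsc it is $\sigma$-compact, so I would first fix an increasing sequence $(K_n)$ of compact subsets of $G$ with $K_n\subseteq\operatorname{int}(K_{n+1})$ and $\bigcup_n K_n=G$; then every compact subset of $G$ is contained in some $K_n$. For each $n$ apply condition~(4) to the compact set $K_n$ and to $\epsilon=2^{-n}$, obtaining a compact set $L_n$ (which we may enlarge so that $e\in L_n$) and a continuous map $\xi^{(n)}\colon G\to L^2(G)$ with $\|\xi^{(n)}_t\|_2=1$, $\supp\xi^{(n)}_t\subseteq tL_n$ for every $t\in G$, and $\|\xi^{(n)}_s-\xi^{(n)}_t\|_2<2^{-n}$ whenever $(s,t)\in\Tube(K_n)$.

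Next, fixing the basepoint $e\in G$, define $u\colon G\to H:=\bigoplus_n L^2(G)$ by $u(t)=\bigoplus_n\bigl(\xi^{(n)}_t-\xi^{(n)}_e\bigr)$. To see this lands in $H$, note that for a fixed $t$ we have $(e,t)\in\Tube(K_n)$ as soon as $t\in K_n$, which holds for all but finitely many $n$; for those $n$ the $n$-th summand has norm $<2^{-n}$, while each of the remaining finitely many summands has norm at most $2$, so $\sum_n\|\xi^{(n)}_t-\xi^{(n)}_e\|_2^2<\infty$.

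It then remains to check the two conditions of Definition~\ref{CA}. For (a): given a compact $K\subseteq G$, pick $n_0$ with $K\subseteq K_{n_0}$; for $(s,t)\in\Tube(K)\subseteq\Tube(K_n)$ with $n\ge n_0$ the $n$-th summand contributes less than $4^{-n}$, so those terms contribute at most $\sum_{n\ge 1}4^{-n}<1$, while the remaining $n_0-1$ summands each contribute at most $4$; hence $\|u(s)-u(t)\|\le(4n_0-3)^{1/2}=:R$. For (b): given $R>0$, choose $N$ with $2N>R^2$ and put $K:=\bigcup_{n=1}^N L_nL_n^{-1}$, which is compact, so $\Tube(K)=\bigcup_{n=1}^N\Tube(L_nL_n^{-1})$. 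The key point is that if $(s,t)\notin\Tube(L_nL_n^{-1})$ then $sL_n\cap tL_n=\emptyset$, so $\xi^{(n)}_s$ and $\xi^{(n)}_t$ have disjoint supports and $\|\xi^{(n)}_s-\xi^{(n)}_t\|_2^2=\|\xi^{(n)}_s\|_2^2+\|\xi^{(n)}_t\|_2^2=2$. Therefore, if $(s,t)\notin\Tube(K)$ this happens for every $n\le N$, giving $\|u(s)-u(t)\|^2\ge 2N>R^2$; contrapositively, $\|u(s)-u(t)\|\le R$ forces $(s,t)\in\Tube(K)$, which is condition~(b).

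This is a routine adaptation of Yu's argument, so I do not anticipate a serious obstacle; the one place requiring care is condition~(b), where one must exploit the support condition $\supp\xi^{(n)}_t\subseteq tL_n$ — not merely almost-invariance — to produce a \emph{lower} bound on the growth of $\|u(s)-u(t)\|$, together with the easy but essential observation that a finite union of tubes is again a tube. I would also note that, alternatively, the proposition follows at once from Corollary~\ref{AAmen} combined with Anantharaman-Delaroche's theorem that \lcsc groups amenable at infinity embed uniformly into Hilbert space \cite{CA02}.
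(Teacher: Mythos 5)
Your proof is correct, but it takes a genuinely different route from the paper. The paper gives no argument at all: it simply combines Corollary \ref{AAmen} (property A is equivalent to amenability at infinity) with Anantharaman-Delaroche's result that \lcsc groups amenable at infinity embed uniformly into Hilbert space — which is exactly the alternative you mention in your last sentence, and indeed the bracketed citation in the statement points to \cite{CA02}. You instead give a direct, self-contained Yu-style construction from condition (4) of Theorem \ref{equA}: exhaust $G$ by compacta $K_n$, take $\xi^{(n)}$ with error $2^{-n}$, and assemble $u(t)=\bigoplus_n(\xi^{(n)}_t-\xi^{(n)}_e)$. I checked the three places where this could go wrong and they are all fine: square-summability of $u(t)$ follows since $(e,t)\in\Tube(K_n)$ for all but finitely many $n$; the upper bound in (a) follows from splitting the sum at $n_0$; and the lower bound in (b) correctly exploits that $s^{-1}t\notin L_nL_n^{-1}$ forces $sL_n\cap tL_n=\emptyset$, hence $\xi^{(n)}_s\perp\xi^{(n)}_t$ and an exact contribution of $2$ per index, together with the fact that $\Tube\bigl(\bigcup_{n\le N}L_nL_n^{-1}\bigr)$ is still a tube. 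What your approach buys is independence from the topological-amenability machinery of \cite{CA02} and from Corollary \ref{AAmen}; it only uses the internal characterization (4) already proved in Theorem \ref{equA}, and it produces the uniform embedding explicitly. What the paper's route buys is brevity, and it situates the result inside the amenability-at-infinity picture that the rest of that section develops. One cosmetic remark: your $u$ need not be continuous, but Definition \ref{CA} does not require continuity, and Proposition \ref{lem:ubcont} upgrades it if needed.
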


As with property A, whenever there is a uniform embedding of $G$ into $H$, there also is a continuous uniform embedding of $G$ into $H$.
\begin{pro}
\label{lem:ubcont}
Let $G$ be a locally compact, second countable group. The following are equivalent:\\
1): $G$ admits a uniform embedding into a Hilbert space;\\
2): $G$ admits a Borel uniform embedding into a separable Hilbert space;\\
3): $G$ admits a continuous uniform embedding into a separable Hilbert space.
\end{pro}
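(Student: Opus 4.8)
The implication $3)\Rightarrow 1)$ is immediate, since a continuous uniform embedding into a separable Hilbert space is, a fortiori, a uniform embedding into a Hilbert space. So the plan is to prove $1)\Rightarrow 2)$ and $2)\Rightarrow 3)$, and both arguments run parallel to the two steps in the proof of Lemma \ref{lem:cont}: first discretize, then convolve with a cut-off function.

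For $1)\Rightarrow 2)$, I would discretize a given uniform embedding $u\colon G\to H$, exactly as in Step 1 of Lemma \ref{lem:cont}. Fix a compact set $C\subseteq G$ with nonempty interior; by second countability choose $(s_n)$ with $G=\bigcup_n s_nC$, and let $C_1=s_1C$ and $C_n=s_nC\setminus(C_1\cup\dots\cup C_{n-1})$, a Borel partition of $G$. Define $v\colon G\to H$ by $v(t)=u(s_n)$ for $t\in C_n$. Then $v$ is Borel and countably-valued, hence takes values in the separable closed subspace $\overline{\operatorname{span}}\{u(s_n):n\in\N\}$. To see that $v$ is again a uniform embedding one checks that $s\in C_n$ and $t\in C_m$ force $s_n\in sC^{-1}$ and $s_m\in tC^{-1}$, so that $(s,t)\in\Tube(K)$ implies $(s_n,s_m)\in\Tube(CKC^{-1})$, while $(s_n,s_m)\in\Tube(K)$ implies $(s,t)\in\Tube(C^{-1}KC)$; feeding these two inclusions into conditions a) and b) for $u$ yields conditions a) and b) for $v$, with the compact control sets enlarged by $C$ and $C^{-1}$.

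For $2)\Rightarrow 3)$, I would smooth a given Borel uniform embedding $v\colon G\to H$ into a separable Hilbert space by convolving with a cut-off function, as in Step 2 of Lemma \ref{lem:cont} and the proof of Lemma \ref{lem:kernel}. First note that $v$ is bounded on each compact $Q\subseteq G$: if $s,t\in Q$ then $(s,t)\in\Tube(Q^{-1}Q)$, so condition a) bounds the diameter of $v(Q)$. Fix a cut-off function $f$ with $C=\supp f$ and set $u(t)=\int_G f(r^{-1}t)\,v(r)\,d\mu(r)$, a well-defined Bochner integral since the integrand is Borel, bounded, separably-valued and compactly supported; clearly $u$ maps into $H$. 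By left-invariance of $\mu$ one rewrites $u(s)-u(t)=\int_G f(r)(v(sr)-v(tr))\,d\mu(r)$, and since $(sr)^{-1}(tr)=r^{-1}s^{-1}t\,r\in C^{-1}KC$ whenever $s^{-1}t\in K$ and $r\in C$, condition a) for $v$ yields condition a) for $u$. Continuity of $u$ follows from the Lebesgue dominated convergence argument of Step 2 of Lemma \ref{lem:cont}, using that $f$ is continuous with compact support and that $v$ is bounded on the relevant compact set.

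The step I expect to be the main obstacle is verifying condition b) for $u$ in $2)\Rightarrow 3)$: in principle the convolution could cause cancellation and destroy the lower control that forces $s^{-1}t$ into a compact set when $\|u(s)-u(t)\|$ is small. The remedy is that $u$ stays a uniformly bounded distance from $v$: one has $u(t)-v(t)=\int_G f(r)(v(tr)-v(t))\,d\mu(r)$ and $(t,tr)\in\Tube(C)$ for $r\in C$, so $\|u(t)-v(t)\|\le R_0$, where $R_0$ is the bound from condition a) for $v$ on the tube $\Tube(C)$. Hence $\|v(s)-v(t)\|\le\|u(s)-u(t)\|+2R_0$, and condition b) for $v$ immediately gives condition b) for $u$. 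Everything else is measure-theoretic bookkeeping of the kind already carried out in Lemmas \ref{lem:cont} and \ref{lem:kernel}.
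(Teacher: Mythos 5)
Your proposal is correct and follows essentially the same route as the paper: discretize over a Borel partition $\{C_n\}$ to get a countably-valued Borel embedding into a separable subspace, then convolve with a cut-off function to gain continuity, using in both steps that the modified map stays at uniformly bounded distance from the original one so that conditions a) and b) survive. The only cosmetic differences are that you realize the convolution as a Bochner integral where the paper uses the Riesz representation of the weak integral, and that in $1)\Rightarrow 2)$ you track the enlarged tubes $CKC^{-1}$ and $C^{-1}KC$ directly rather than invoking the bounded-distance observation.
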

\begin{proof} It is clear that 3) implies 1).

\textbf{1)$\Rightarrow$ 2):} Let $u:G\ra H$ be a uniform embedding into a Hilbert space $H$. Let $C$ be a compact neighborhood of identity in $G$. As in the proof of
lemma \ref{lem:cont}, we find group elements $(s_n)_n$ and Borel subsets $C_n\subseteq s_nC$ such that $G=\bigsqcup_{n}C_n$. Define $u^\prime:G\ra H$ by the property that $u^\prime(s)=u(s_n)$ whenever $s\in C_n$.
This way, $u^\prime$ is a Borel step function. Since $u$ is a uniform embedding, we find an $R>0$ such that $\norm{u(s)-u(t)}\leq R$ whenever $s^{-1}t\in C$. Fix $n\in\N$ and observe that every $s\in C_n$ satisfies $s_n^{-1}s\in C$. As a consequence,
\[\norm{u^\prime(s)-u(s)}=\norm{u(s_n)-u(s)}\leq R.\]
Since this is true for all $n\in \N$ and $s\in C_n$, we see that $u^\prime$ is at bounded distance from $u$, so $u^\prime$ is still a uniform embedding.
Observe that $u^\prime$ takes values only in the separable closed subspace $H_0\subseteq H$ that is spanned by $\{u(s_n): n\in \N\}$. In other words, $u^\prime$ is
a Borel uniform embedding into the separable Hilbert space $H_0$.

\textbf{2)$\Rightarrow$ 3):} Let $u:G\ra H$ be a Borel uniform embedding into a separable Hilbert space $H$. Let $f$ be a cut-off function for $G$. Since $u$ is a uniform embedding, we find $R>0$ such that
$\norm{u(s)-u(t)}\leq R$ whenever $s^{-1}t\in \supp(f)$.
For a fixed $t\in G$, we define an anti-linear functional $\varphi_t:H\ra \C$ by the formula $\varphi_t(v)=\int_G\ip{f(s^{-1}t)u(s)}{v}d\mu(s)$ for all $v\in H$. Observe that $\varphi_t$ is bounded because
\[\abs{\varphi_t(v)}\leq \int_G\abs{\ip{f(s^{-1}t)u(s)}{v}}d\mu(s)\leq \int_Gf(s^{-1}t)(\norm{u(t)}+R)\norm{v}d\mu(s)=(\norm{u(t)}+R)\norm{v},\]
for every vector $v\in H$.
By the Riesz-Fr\'{e}chet theorem there exists a unique $u^\prime(t)\in H$ such that $\varphi_t(v)=\ip{u^\prime(t)}{v}$ for all $v\in H$. Observe that $u^\prime(t)$ is at distance at most $R$ from $u(t)$:
\begin{align*}
  \abs{\ip{u^\prime(t)-u(t)}{v}}&= \abs{\int_G f(s^{-1}t)\ip{u(s)}{v}d\mu(s)-\ip{u(t)}{v}\int_G f(s^{-1}t)d\mu(s)}\\
  &\leq \int_G f(s^{-1}t)R\norm{v}d\mu(s)\\
  &\leq R\norm{v}.
\end{align*}
In particular, we see that $u^\prime$ is still a uniform embedding of $G$ into $H$.

We show that $u^\prime$ is continuous. This follows from the following computation: let $(t_n)_n$ be a sequence in $G$ that converges to $t\in G$. The sequence $t^{-1}t_n$ remains in some compact
neighborhood $U$ of identity in $G$. Since $u$ is a uniform embedding, we find $R_2>0$ such that $\norm{u(s)-u(t)}\leq R_2$ whenever $t^{-1}s\in U\supp(f)$.
Now we see that, for every $v\in H$ and $n\in \N$,
\begin{align*}
  \abs{\ip{u^\prime(t_n)-u^\prime(t)}{v}}&\leq\int_G \abs{f(s^{-1}t_n)-f(s^{-1}t)}\norm{u(s)}\norm{v}d\mu(s)\\
  &=\int_G\abs{f(t_n^{-1}s)-f(t^{-1}s)}\norm{u(s)}\norm{v} d\mu(s)\\
  &\leq \norm{t_n\cdot f-t\cdot f}_\infty\mu(U\supp(f))(\norm{u(t)}+R_2)\norm{v}.
\end{align*}
Since $f$ is continuous with compact support, we see that $\norm{t_n\cdot f- t\cdot f}_\infty$ tends to 0. Therefore we also get that $\norm{u^\prime(t_n)-u^\prime(t)}$ tends to 0.
\end{proof}

We give an alternative characterization of uniform embedding into Hilbert space in terms of transformation groupoids and conditionally negative type functions on it. For the convenience of our readers, we recall these concepts:

Let $G$ be a locally compact group acting continuously on a locally compact Hausdorff space $X$. The transformation groupoid $X\rtimes G$ consists of all pairs $(x,g)$ with $x\in X$, $g\in G$. Its base space is $X$, and the source and range maps are given by
\begin{align*}
s(x,g)=g^{-1}x, \quad r(x,g)=x.
\end{align*}
The composition law is $(gx,g)(x,g')=(gx,gg')$ and the inversion is given by $(x,g)^{-1}=(g^{-1}x,g^{-1})$.

A conditionally negative type function on $X\rtimes G$ is a function $\psi: X\times G\ra \R$ such that\\
1) $\psi(x,e)=0$ for all $x\in X$;\\
2) $\psi(x,g)=\psi(g^{-1}x,g^{-1})$ for all $(x,g)\in X\times G$;\\
3) $\sum_{i,j=1}^nt_it_j \psi(g_i^{-1}x,g_i^{-1}g_j)\leq 0$ for all $\{t_i\}_{i=1}^n \subseteq \mathbb{R}$ satisfying $\sum_{i=1}^n t_i=0$, $g_i\in G$ and $x\in X$.

We say that an action $G\actson X$ of a group on a compact Hausdorff space has the Haagerup property if its transformation groupoid $X\rtimes G$ admits a continuous proper conditionally negative type function.

\begin{theo}\label{u.e.Groupoid}
Let $G$ be a \lcsc group. The following are equivalent:\\
1): $G$ admits a uniform embedding into a Hilbert space.\\
2): There exists a continuous conditionally negative type kernel $k$ on $G\times G$ satisfying
\begin{itemize}
\item{$k$ is bounded on every tube;}
\item{$k$ is a proper kernel, i.e. $\{(s,t)\in G\times G: |k(s,t)|\leq R\}$ is a tube for all $R>0$.}
\end{itemize}
3): The action $G\actson\beta^u(G)$ has the Haagerup property, i.e, there exists a continuous proper conditionally negative type function on $\beta^u(G)\rtimes G$.\\
4): There exists a second countable compact Hausdorff left $G$-space $Y$ which admits a continuous proper conditionally negative type function on $Y\rtimes G$.
\end{theo}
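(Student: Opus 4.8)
The plan is to prove the cycle of implications $1)\Rightarrow 2)\Rightarrow 3)\Rightarrow 4)\Rightarrow 1)$, mirroring the structure of the proof of Theorem \ref{equA} and again using the convolution-smoothing machinery of Lemma \ref{lem:kernel}. For $1)\Rightarrow 2)$, start from a uniform embedding $u:G\to H$, which we may take to be continuous by Proposition \ref{lem:ubcont}, and set $k_0(s,t)=\|u(s)-u(t)\|^2$. This is a continuous conditionally negative type kernel (the standard fact that $\|u(s)-u(t)\|^2$ is c.n.t.\ whenever $u$ maps into a Hilbert space). Condition a) of Definition \ref{CA} says exactly that $k_0$ is bounded on every tube, and condition b) says exactly that $\{(s,t):|k_0(s,t)|\le R^2\}$ is a tube for every $R$, i.e.\ $k_0$ is proper. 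So $k_0$ itself already works, and no smoothing is needed for this implication.

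For $2)\Rightarrow 3)$, the key point is to feed the kernel $k$ through Lemma \ref{lem:kernel}, convolving with a cut-off function $f$ to obtain $k'$. By parts (1), (2) of that lemma, $k'$ is bounded on every tube and satisfies the uniform continuity property of Observation \ref{observation}; the same computation as in the proof of part (3) of the lemma (which carries over verbatim to the c.n.t.\ setting, since convolving a c.n.t.\ kernel against a positive $L^1$ function again gives a c.n.t.\ kernel --- this is checked by the usual integral of $\|\xi(sv)-\xi(tw)\|^2$-type expansion, or by noting $k_0$ c.n.t.\ iff $e^{-tk_0}$ positive type for all $t>0$ and applying part (4)) shows $k'$ is still c.n.t. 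Properness is preserved up to coarse equivalence because $k'$ and $k$ differ by a kernel bounded on tubes together with a "blurring" by $\supp f\times\supp f$; more precisely $k(sv,tw)$ for $v,w\in\supp f$ stays in a tube-controlled window of $k(s,t)$, so $\{k'\le R\}$ is still contained in a tube. Now $k'\in C_{b,\theta}(G\times G)$ by Observation \ref{observation}. Then define $\psi$ on $\beta^u(G)\rtimes G$ by first setting $\tilde\psi(s,t)=k'(s,s t)$ on $G\times G$ (so that $\tilde\psi = k'\circ\theta^{-1}$-type precomposition puts the first variable in the $\beta^u(G)$-slot), check it satisfies the three axioms of a c.n.t.\ function on the transformation groupoid --- axiom 1) from $k'(s,s)=0$ after recentering, axiom 2) from symmetry of $k'$, axiom 3) from the c.n.t.\ property of $k'$ --- and use that $\tilde\psi$ extends continuously to $\beta^u(G)\times G$ because $k'\in C_{b,\theta}(G\times G)$. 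Properness of $\psi$ on the groupoid (preimages of bounded sets are "compact in the groupoid", i.e.\ contained in $\beta^u(G)\times(\text{compact})$) follows from properness of $k'$.

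The implication $3)\Rightarrow 4)$ is a standard separability-reduction: given a continuous proper c.n.t.\ function $\psi$ on $\beta^u(G)\rtimes G$, the function $\psi$ factors through a second countable $G$-invariant quotient $Y$ of $\beta^u(G)$ --- take the closed $G$-invariant $C^\ast$-subalgebra of $C(\beta^u(G))$ generated by a countable family of slices $x\mapsto\psi(x,g)$ ($g$ in a countable dense subset of $G$) together with their $G$-translates, and let $Y$ be its spectrum; $\psi$ descends to $Y\rtimes G$, stays continuous, c.n.t., and proper, and $Y$ is second countable compact Hausdorff. For $4)\Rightarrow 1)$, given $Y$ and a proper c.n.t.\ function $\psi$ on $Y\rtimes G$, fix any point $y_0\in Y$ and define $k(s,t)=\psi(s^{-1}y_0,s^{-1}t)$ on $G\times G$; the groupoid c.n.t.\ axioms make $k$ a c.n.t.\ kernel on $G$, bounded on tubes (by continuity of $\psi$ and compactness of $Y$) and proper (from properness of $\psi$), so by condition 2), which we have just shown implies 1) --- wait, more directly: a c.n.t.\ kernel $k$ on $G$ that is bounded on tubes and proper gives, via the GNS-type construction for c.n.t.\ kernels (Schoenberg), a map $u:G\to H$ with $\|u(s)-u(t)\|^2 = k(s,t)-\tfrac12(k(s,s)+k(t,t))$ up to the usual affine correction, and this $u$ satisfies conditions a), b) of Definition \ref{CA}. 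Composing $1)\Rightarrow 2)$ with the observation that $2)$ is really the same data as a uniform embedding closes the loop; alternatively one runs $4)\Rightarrow 2)\Rightarrow 1)$ where $2)\Rightarrow 1)$ is exactly Schoenberg's theorem.

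The main obstacle is the bookkeeping in $2)\Rightarrow 3)$: one must verify carefully that the convolved kernel $k'$ is still \emph{proper} (properness, unlike boundedness on tubes, is not obviously preserved by arbitrary modifications), and that the resulting function on the transformation groupoid genuinely satisfies the c.n.t.\ axioms \emph{and} is proper as a function on the groupoid in the sense required for the Haagerup property --- the translation between "proper kernel on $G\times G$" and "proper c.n.t.\ function on $\beta^u(G)\rtimes G$" has to be spelled out, since the groupoid has non-trivial unit space. Everything else is either a direct translation of Definition \ref{CA}, a citation of Schoenberg's theorem, or an application of Lemma \ref{lem:kernel} and Observation \ref{observation}.
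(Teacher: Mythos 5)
Your cycle $1)\Rightarrow 2)\Rightarrow 3)\Rightarrow 4)\Rightarrow 1)$ differs from the paper's only in orientation (the paper proves $2)\Rightarrow 1)$, $1)\Rightarrow 3)$, $3)\Rightarrow 4)$, $4)\Rightarrow 2)$), and your steps $1)\Rightarrow 2)$, $3)\Rightarrow 4)$ and $4)\Rightarrow 1)$ are essentially the paper's arguments. The genuine gap is in $2)\Rightarrow 3)$, at the claim that convolving a conditionally negative type kernel with a cut-off function yields a conditionally negative type kernel. Neither justification you offer works as stated. The Schoenberg route fails because exponentiation does not commute with convolution: $e^{-rk'}$ is not the convolution of $e^{-rk_0}$, so Lemma \ref{lem:kernel}(4) gives you nothing about $e^{-rk'}$. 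The direct expansion, if you carry it out with $k_0(s,t)=\norm{\xi(s)-\xi(t)}^2$, gives
\[k'(s,t)=\norm{\eta(s)-\eta(t)}^2+g(s)+g(t),\qquad \eta(s)=\int_G f(v)\xi(sv)\,d\mu(v),\quad g(s)=\int_G f(v)\norm{\xi(sv)}^2d\mu(v)-\norm{\eta(s)}^2\ge 0,\]
so $k'(s,s)=2g(s)$ is in general nonzero. Hence $k'$ is not conditionally negative type in the normalization required both by the GNS theorem you invoke later and by axiom 1) of a c.n.t.\ function on the groupoid. Your parenthetical ``after recentering'' is the right instinct, but it is exactly the step that needs to be done and checked: one must pass to $k'(s,t)-\tfrac12(k'(s,s)+k'(t,t))=\norm{\eta(s)-\eta(t)}^2$, verify that this recentred kernel is still proper and bounded on tubes, and verify that the diagonal correction terms $(s,t)\mapsto k'(s,s)$ and $(s,t)\mapsto k'(t,t)$ also lie in $C_{b,\theta}(G\times G)$ so that the recentred kernel still extends to $\beta^u(G)\times G$. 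The paper avoids the whole issue by convolving the Hilbert-space-valued map $u$ rather than the kernel: $\phi(s,t)=\norm{u*f(s)-u*f(t)}^2$ is automatically conditionally negative type, and an explicit computation identifies it with $k(s,t)-\tfrac12(k(s,s)+k(t,t))$, from which the extension to $\beta^u(G)\times G$ and properness follow.

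Two smaller points. First, your formula $\tilde\psi(s,t)=k'(s,st)$ is the wrong precomposition: with it, axiom 3) evaluates $k'$ at pairs $\bigl(g_i^{-1}x,\ g_i^{-1}xg_i^{-1}g_j\bigr)$, which is not of the form $k'(y_i,y_j)$ for a single family $(y_i)$, so the c.n.t.\ condition for $k'$ does not apply; you need $\tilde\psi=k'\circ\theta$, i.e.\ $\tilde\psi(s,t)=k'(s^{-1},s^{-1}t)$, which turns axiom 3) into $\sum_{i,j}t_it_jk'(x^{-1}g_i,x^{-1}g_j)\le 0$. Second, your properness argument for $k'$ implicitly uses $k_0\ge 0$, which again requires first passing through the GNS representation $k_0(s,t)=\norm{\xi(s)-\xi(t)}^2$; once you do that, you may as well run the paper's argument.
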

\begin{proof}
%
\textbf{2)$\Rightarrow$ 1):} Assume that $k$ is a continuous conditionally negative type kernel on $G\times G$ satisfying the conditions in 2). It follows from the GNS construction (see Theorem C.2.3 \cite{BHV08}) that there exist a real Hilbert space $H$ and a continuous map $u:G\ra H$ such that
\begin{align*}
k(s,t)=||u(s)-u(t)||^2.
\end{align*}
By the conditions on $k$, it is easy to see that $u$ is a uniform embedding.

\textbf{1)$\Rightarrow$ 3):} We may assume that $G$ admits a continuous uniform embedding $u:G\ra H$, where $H$ is a (separable) Hilbert space. Define a continuous function $k_0:G\times G\ra \R$ by
\begin{align*}
k_0(s,t)=||u(s)-u(t)||^2.
\end{align*}
It is clear that $k_0$ is bounded on every tube. Let $f$ be a cut-off function for $G$. It follows from lemma \ref{lem:kernel} that the kernel $k:G\times G\ra \R$ that is given by
\begin{align*}
k(s,t)=\int_G\int_G f(v)k_0(sv,tw)f(w)d\mu(w)d\mu(v)
\end{align*}
is continuous and bounded on every tube. Moreover, $k\circ \theta$ has a continuous extension $\psi_0:\beta^u(G)\times G\ra \R$.

We have already seen in the proof of the previous proposition that there exists a unique continuous uniform embedding $u*f:G\ra H$ such that
\begin{align*}
\ip{u*f(t)}{\eta}=\int_G \ip{f(s^{-1}t)u(s)}{\eta}d\mu(s)\quad \text{for $\eta\in H$}.
\end{align*}
Now, we define a continuous conditionally negative type kernel $\phi:G\times G\ra \R$ by
\begin{align*}
\phi(s,t)=||u*f(s)-u*f(t)||^2.
\end{align*}
By the definition of $u*f$, it is not hard to see that
\begin{align*}
\phi(s,t)&=\text{Re}\int_G\int_G f(v)\ip{u(sv)-u(tv)}{u(sw)-u(tw)}f(w)d\mu(w)d\mu(v)\\
         &=\int_G\int_G f(v)\text{Re}{\ip{u(sv)-u(tv)}{u(sw)-u(tw)}}f(w)d\mu(w)d\mu(v)\\
         &=\frac12\int_G\int_G f(v)f(w)
         \left(\begin{aligned}
             &-\norm{u(sw)}^2 + 2\text{Re}{\ip{u(sv)}{u(sw)}} -\norm{u(sv)}^2\\
             &+ \norm{u(sv)}^2 - 2\text{Re}{\ip{u(sv)}{u(tw)}} + \norm{u(tw)}^2\\
             &- \norm{u(tw)}^2 +2\text{Re}\ip{u(tv)}{u(tw)} - \norm{u(tv)}^2\\
             &+ \norm{u(tv)}^2 - 2\text{Re}\ip{u(tv)}{u(sw)} + \norm{u(sw)}^2\end{aligned}\right)
         d\mu(w)d\mu(v)\\
         &=\frac12\int_G\int_G f(v)f(w)(-k_0(sv,sw)+k_0(sv,tw)-k_0(tv,tw)+k_0(tv,sw))d\mu(w)d\mu(v)\\
         &=\frac{1}{2}(k(s,t)-k(s,s)-k(t,t)+k(t,s))\\
         &=\frac{1}{2}(2k(s,t)-k(s,s)-k(t,t)),
\end{align*}
where the first equality follows from the fact that $\phi$ is real-valued.

Thus, the function $\psi:\beta^u(G)\times G\rightarrow\R$ that is given by $\psi(y,t)=\psi_0(y,t)-\frac12(\psi_0(y,e)+\psi_0(t^{-1}y,e))$ extends $\phi\circ \theta$ continuously.
Note that $\phi$ is a conditionally negative type kernel on $G\times G$ if and only if $\phi\circ \theta$ is a conditionally negative type function on $G\rtimes G$, which is also equivalent to $\psi$ being a conditionally negative type function on $\beta^u(G)\rtimes G$. Moreover, $\psi$ is proper because $\{(s,t)\in G\times G: |\phi(s,t)|\leq R\}$ is a tube for all $R>0$.

\textbf{3)$\Rightarrow$ 4):} Let $\phi:\beta^u(G)\times G\ra \R$ be a continuous proper conditionally negative type function on $\beta^u(G)\rtimes G$. If we identify $C(\beta^u(G)\times G)$ with $C(G,C(\beta^u(G)))$, then $G\ni t\mapsto \phi(\cdot,t)\in C(\beta^u(G))$ is a continuous map. Let $A$ be the $C^*$-algebra generated by the unit in $C(\beta^u(G))$ and the set $\{s.\phi(\cdot,t):s,t\in G\}$. It is clear that $A$ is a unital, separable and $G$-invariant $C^*$-subalgebra of $C(\beta^u(G))$. Hence, there exists a compact Hausdorff, second countable left $G$-space $Y$ such that $A\cong C(Y)$. It is not hard to see that there exist a continuous $G$-equivariant surjection $p:\beta^u(G)\ra Y$ and a continuous function $\psi:Y\times G\ra \R$ such that the following diagram
$$
\xymatrix{
\beta^u(G)\times G \ar[dr]^{\phi}\ar[d]_{p\times \text{id}}\\
Y\times G \ar[r]^{\psi} &\R&
}
$$ 
commutes. The properness of $\psi$ follows from the properness of $\phi$ and the surjectivity of $p$. Since $p$ is also $G$-equivariant, $\psi$ is a conditionally negative type function on $Y\rtimes G$, as desired.

\textbf{4)$\Rightarrow$ 2):} Let $\varphi:Y\times G\rightarrow \R$ be a conditionally negative type function on $Y\rtimes G$. Fix one point $y_0\in Y$ and define a kernel $k:G\times G\rightarrow \R$
by the following formula:
\[k(s,t)=\varphi(s^{-1}y_0,s^{-1}t)\quad\text{for all }s,t\in G.\]
It is now clear that $k$ is a continuous function. Because $\varphi$ was a conditionally negative function, one easily computes that $k$ is a conditionally negative type kernel on $G$. Moreover,
because $Y$ is compact and $\varphi$ is continuous, it follows that $k$ is bounded on tubes. Finally, the properness of $\varphi$ translates to the properness of $k$, as a kernel on $G$.
\end{proof}

Our final result shows that for every group $G$ that embeds uniformly into a Hilbert space, the Baum-Connes assembly map with coefficients is split-injective. The analogous result for discrete groups
was first proven by Skandalis, Tu and Yu (\cite{STY02} Theorem 6.1). The argument is almost identical to the one used to prove \cite[Theorem 1.9]{CEO04}.
\begin{theo}\label{thm:BC}
If $G$ is a \lcsc group which admits a uniform embedding into Hilbert space, then the Baum-Connes assembly map
\begin{align*}
\mu_A:K_*^\emph{top}(G;A)\ra K_*(A\rtimes_r G)
\end{align*} 
is split-injective for any separable $G$-$C^*$-algebra $A$.
\end{theo}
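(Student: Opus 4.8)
The plan is to transport the statement to the transformation groupoid $Y\rtimes G$, where the Baum--Connes conjecture is available, and then to descend back to $G$ by Higson's descent technique together with the going--down functor of \cite{CEO04}; the argument is, up to cosmetic changes, the one used to prove \cite[Theorem 1.9]{CEO04}. First I would invoke the implication $1)\Rightarrow 4)$ of Theorem \ref{u.e.Groupoid}: since $G$ embeds uniformly into Hilbert space, there is a second countable compact Hausdorff left $G$-space $Y$ carrying a continuous proper conditionally negative type function on the groupoid $Y\rtimes G$. By the groupoid analogue of the Delorme--Guichardet correspondence this says exactly that the second countable locally compact groupoid $Y\rtimes G$ (with the Haar system induced by $\mu$) has the Haagerup property, i.e. is a-T-menable.

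Next I would apply Tu's theorem (see \cite{Tu99}) that every a-T-menable second countable locally compact groupoid with Haar system satisfies the Baum--Connes conjecture with coefficients: such a groupoid possesses a $\gamma$-element equal to $1$, so that for every coefficient $C^*$-algebra $\mathcal B$ the assembly map $\mu^{Y\rtimes G}_{\mathcal B}$ is an isomorphism. I would use this for $\mathcal B=A\otimes C(Y)$ (minimal tensor product, diagonal $G$-action), which is a separable $C^*$-algebra because $A$ and $Y$ are, and which becomes a $Y\rtimes G$-algebra through the central inclusion $C(Y)\subseteq M(A\otimes C(Y))$.

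Then comes the descent. The unital $G$-equivariant inclusion $\iota\colon\C\hookrightarrow C(Y)$ induces a commuting square
\[
\xymatrix{
K_*^{\mathrm{top}}(G;A) \ar[r]^{\mu_A} \ar[d]_{\iota_*} & K_*(A\rtimes_r G) \ar[d]^{\iota_*}\\
K_*^{\mathrm{top}}(G;A\otimes C(Y)) \ar[r]^{\mu_{A\otimes C(Y)}} & K_*((A\otimes C(Y))\rtimes_r G)
}
\]
and, under the canonical identifications $(A\otimes C(Y))\rtimes_r G\cong (A\otimes C(Y))\rtimes_r(Y\rtimes G)$ and $K_*^{\mathrm{top}}(G;A\otimes C(Y))\cong K_*^{\mathrm{top}}(Y\rtimes G;A\otimes C(Y))$ --- legitimate because $Y$ is compact, so that $\underline EG\times Y$ is a classifying space for the proper actions of $Y\rtimes G$ --- the bottom horizontal arrow is precisely $\mu^{Y\rtimes G}_{A\otimes C(Y)}$, hence an isomorphism by the previous step. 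Restricting to $G$-compact subsets $Z\subseteq\underline EG$, where $K_*^{\mathrm{top}}(G;A)=\varinjlim_Z KK^G_*(C_0(Z),A)$, the going--down functor of \cite{CEO04} together with Higson's descent converts this isomorphism into a one-sided inverse of $\mu_A$; this is the proof of \cite[Theorem 1.9]{CEO04} with ``topologically amenable groupoid'' replaced by ``a-T-menable groupoid'', the space $\beta^u(G)$ replaced by $Y$, and the amenability of $\beta^u(G)\rtimes G$ used there replaced by Tu's theorem above. Split-injectivity of $\mu_A$ follows.

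The hard part is exactly this last step, which I would not carry out in detail: it needs the comparison of $G$-equivariant $KK$-theory with $Y\rtimes G$-equivariant $KK$-theory for coefficients of the form $A\otimes C(Y)$, the compatibility of the two assembly maps under this comparison, and the behaviour of the going--down functor with respect to $\iota_*$. All of this is already assembled in \cite{CEO04} (on the basis of \cite{H00} and Tu's work on groupoids), and it is engineered to use only the single input ``Baum--Connes with coefficients holds for $Y\rtimes G$''. Thus the one genuinely new ingredient needed here is Theorem \ref{u.e.Groupoid}, which is what upgrades \cite[Theorem 1.9]{CEO04} from groups that are amenable at infinity to groups that merely embed uniformly into Hilbert space --- the locally compact analogue of the passage from \cite{H00} to \cite{STY02}.
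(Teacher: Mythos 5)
Your overall strategy is the one the paper uses (Theorem \ref{u.e.Groupoid}(4), Tu's theorem for a-T-menable groupoids, the Higson descent square, and the going--down machinery of \cite{CEO04}), but there is one concrete gap: you run the descent square with $C(Y)$ itself, and for a general second countable compact $G$-space $Y$ the left vertical arrow $\iota_*\colon K_*^{\mathrm{top}}(G;A)\to K_*^{\mathrm{top}}(G;A\otimes C(Y))$ has no reason to admit a left inverse. The diagram chase needs exactly such a left inverse (the splitting of $\mu_A$ is $(\iota_*^{\mathrm{left}})^{-1}\circ\mu_{A\otimes C(Y)}^{-1}\circ\iota_*^{\mathrm{right}}$), and the only available tool --- Proposition 1.10 of \cite{CEO04}, applied via the going--down functor --- requires the coefficient space to be $K$-equivariantly contractible for every compact subgroup $K\leq G$. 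Your $Y$ need not be. This is not a detail that the machinery of \cite{CEO04} absorbs automatically; it is a hypothesis on the space that must be arranged.

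The fix, which is what the paper does, is to replace $Y$ by $X=\mathrm{Prob}(Y)$ with the weak-$*$ topology: $X$ is a second countable compact convex $G$-space on which $G$ acts affinely, hence $K$-equivariantly contractible for all compact $K\leq G$, so Proposition 1.10 of \cite{CEO04} makes the left vertical arrow an isomorphism. But this substitution creates an obligation you do not address: Tu's theorem must now be applied to $X\rtimes G$, not $Y\rtimes G$, so you must transport the continuous proper conditionally negative type function from $Y\rtimes G$ to $X\rtimes G$. The paper does this by setting $\phi(m,t)=\int_Y\psi(y,t)\,dm(y)$ and checking continuity, properness and the negative-type condition. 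So your statement that the \cite{CEO04} argument ``is engineered to use only the single input: Baum--Connes holds for $Y\rtimes G$'' is slightly off --- the input actually needed is Baum--Connes for $\mathrm{Prob}(Y)\rtimes G$, and deducing that from your hypothesis is a small but genuinely necessary extra step.
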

\begin{proof}
Suppose that $\psi$ is a continuous proper conditionally negative type function on $Y\rtimes G$ as in theorem \ref{u.e.Groupoid} 4). We show first that we can assume that $Y$ is a compact convex space,
on which $G$ acts by affine transformations. Let $X$ denote the space Prob($Y$) of Borel probability measures on $Y$ equipped with the weak-$*$ topology. Notice that $X$ is a second countable compact Hausdorff left $G$-space (with the induced action from $G\curvearrowright Y$). We define $\phi:X\times G\ra \R$ by
\begin{align*}
\phi(m,t)=\int_Y\psi(y,t)dm(y).
\end{align*}
We claim that $\phi$ is a continuous proper conditionally negative type function on $X\rtimes G$. Indeed, if we identify $X$ with the state space of $C(Y)$, then we see that $\phi(m,t)=m(\psi(\cdot,t))$. Thus, the continuity of $\phi$ follows from the norm-boundedness of $X$ and the continuity of the map $G\ni t\mapsto \psi(\cdot,t)\in C(Y,\R)$. It is not hard to see that $\phi$ is a continuous proper conditionally negative type function since $\psi$ is.

We now consider the following commutative diagram, which is called the Higson descent diagram (cf. the proof of theorem 3.2 in \cite{H00}):
$$
\xymatrix{\ar @{} [ddrr] |{}
K_*^\text{top}(G;A)\ar@{->}[dd]_{i_*} \ar[rr]^{\mu_A} &&K_*(A\rtimes_r G) \ar@{->}[dd]^{i_*} \\ \\
K_*^\text{top}(G;A\otimes C(X)) \ar[rr]_{\mu_{A\otimes C(X)}} && K_*((A\otimes C(X))\rtimes_r G),} \ \ 
$$
where the vertical arrows are induced by the inclusion $i:\C\ra C(X)$ and the horizontal arrows are the Baum-Connes assembly maps. By lemma 4.1 in \cite{STY02}, the Baum-Connes assembly map for the groupoid $X\rtimes G$ with coefficients in $A\otimes C(X)$ is the same as the one for the group $G$ with coefficients in $A\otimes C(X)$. Because whenever the groupoid $X\rtimes G$ has a continuous proper conditionally
negative type function, it also has a proper affine isometric action on a continuous field of Hilbert spaces over $X$ \cite{Tu99}.
Hence, by theorem 9.3 in \cite{Tu99} the bottom horizontal arrow is an isomorphism. Since $X$ is convex and the action of $G$ on $X$ is affine, the space $X$ is $K$-equivariantly contractible for any compact subgroup $K$ of $G$. By proposition 1.10 in \cite{CEO04}, the left vertical arrow is an isomorphism. An easy diagram chase then shows the split-injectivity of the assembly map $\mu_A$.
\end{proof}

\markboth{References}{}
\addtocontents{toc}{\vspace{5mm}\hrule}
\addcontentsline{toc}{section}{\numberline {}References}
\addtocontents{toc}{\vspace{5mm}\hrule}
\bibliographystyle{plain}
\small
\bibliography{bibliography}

\quad Kang Li, Department of Mathematical Sciences, University of Copenhagen, \\
Universitetsparken 5, DK-2100 Copenhagen, Denmark
    
{\it E-mail address:}  kang.li@math.ku.dk

\quad Steven Deprez, Department of Mathematical Sciences, University of Copenhagen, \\
Universitetsparken 5, DK-2100 Copenhagen, Denmark
    
{\it E-mail address:}  sdeprez@math.ku.dk
\end{document}